\newtheorem{theorem}{Theorem}[section]
\newtheorem{lemma}[theorem]{Lemma}
\theoremstyle{definition}
\newtheorem{definition}[theorem]{Definition}
\newcommand{\defn}[1]{{\em #1}}
\theoremstyle{remark}
\newtheorem{problem}[theorem]{Problem}
\newcommand{\dd}{\; \mathrm{d}}
\newcommand\sbs{\subseteq}
\newcommand\ip[2]{\langle#1,#2\rangle}
\newcommand\nn{{\mathbb N}}
\newcommand\al{\alpha}
\newcommand\be{\beta}
\newcommand\Om{\Omega}
\newcommand\opk[1]{\mathop{\mathrm{#1}}\nolimits}
\newcommand\Harm{\opk{Harm}}
\newcommand\Hom{\opk{Hom}}
\begin{document}

\title{Semidefinite programming bounds for complex spherical codes}
\author{ Wei-Jiun Kao\thanks{W.-J. Kao was supported by National Center for Theoretical Sciences (No. 111L104040).}
\\
  National Center for Theoretical Sciences, Mathematics Division, \\
   Taipei, Taiwan\\
  {\tt wjkao@ncts.ntu.edu.tw}
\and
 Sho Suda\thanks{S.S. was supported by JSPS KAKENHI; grant number: 15K21075, 18K03395.} \\
 Department of Mathematics, \\  National Defense Academy of Japan, \\ Yokosuka, 239-8686, Japan \\
 \tt{ssuda@nda.ac.jp}
  \and
 Wei-Hsuan Yu\thanks{W.-H. Yu was supported by  MOST  in  Taiwan  under  Grant  MOST109-2628-M-008-002-MY4.}
 \\
  Mathematics Department, \\
  National Central University, \\
   Taoyuan, Taiwan\\
  {\tt u690604@gmail.com}
}
\date{\today}

\maketitle
\begin{abstract}
A complex spherical code is a finite subset on the unit sphere in $\mathbb{C}^d$. A fundamental problem on complex spherical codes is to find upper bounds for those with prescribed inner products. In this paper, we determine the irreducible decomposition under the action of the one-point stabilizer of the unitary group $U(d)$ on the polynomial ring $\mathbb{C}[z_1\ldots,z_d,\bar{z}_1,\ldots,\bar{z}_d]$ in order to obtain the semidefinite programming bounds for complex spherical codes.      
\end{abstract}

\section{Introduction}
In \cite{DGS}, (real) spherical codes and designs were introduced. 
A spherical code is a finite subset of the unit sphere, denoted $S^{d-1}$, in $\mathbb{R}^d$, and a real spherical design is a finite subset of $S^{d-1}$.    
Finding several bounds for those are obtained by the linear programming. 
This method is based on the irreducible decomposition of the polynomial ring $\mathbb{R}[z_1,\ldots,z_d]$ by the action of the orthogonal group $O(d)$. 
The LP method provides the best possible upper bounds for spherical codes, such as spherical codes with the same inner products with the normalized minimum vectors to lie on the unit sphere of the $E_8$ root lattice or the Leech lattice.   

In \cite{BV}, the semidefinite programming was developed and succeeded to give another proof of the kissing number problem in $\mathbb{R}^4$ \cite{M}.   
Since then, the SDP method have been extensively used to give upper bounds for spherical codes with few distances \cite{Barg13, Liu}.

Complex spherical codes and designs were defined and studied in \cite{RS} with intention to application to complex MUBs or SIC-POVMs. 
The LP method is crucial in this case as well, which is based on the  irreducible decomposition of the polynomial ring $\mathbb{C}[z_1,\ldots,z_d,\bar{z}_1,\ldots,\bar{z}_d]$ by the action of the unitary group $U(d)$. 

In this paper we consider the action of the one-point stabilizer, which is identified with $U(d-1)$, with respect to a point in the unit sphere of the unitary group $U(d)$ on the polynomial ring $\mathbb{C}[z_1,\ldots,z_d,\bar{z}_1,\ldots,\bar{z}_d]$ in order to obtain upper bounds for complex spherical codes by the semidefinite programming. 

The organization of this paper is as follow. 
In Section~\ref{sec:pre}, we review complex spherical codes and association schemes. 
In Section~\ref{sec:SDP}, we determine the irreducible decomposition under the action of $U(d-1)$ on $\mathbb{C}[z_1\ldots,z_d,\bar{z}_1,\ldots,\bar{z}_d]$. 
Note that this action is not multiplicity free. 
We associate each isotypic component, that is the sum of the isomorphic  irreducible subspaces, with so called the zonal matrix.  
We will find the zonal matrix explicitly with respect to some basis in a similar way in \cite{BV}. 
We then formulate the semidefinite programming in Theorem~\ref{thm:sdpbound}. 
As an application, we demonstrate how we improve the known upper bound for complex $2$-codes in Section~\ref{sec:2code}. 
In Section~\ref{sec:complex3code}, we show non-existence results for complex $3$-codes, which show that there exist no commutative association schemes in \cite{MW}. 
In Section~\ref{sec:real3code}, we deal with real spherical $3$-distance sets related to Hadamard matrices and association schemes. We improve the known upper bounds obtained by the linear programming in \cite{AHS} and reprove a result in association schemes.    

\section{Preliminaries}\label{sec:pre}
\subsection{Complex spherical codes}
In this paper, vectors are denoted column vectors. 
Let $\Omega(d)=\{x\in \mathbb{C}^d\mid  x^*x=1\}$ denote the unit sphere in $\mathbb{C}^d$. 
Given $X \sbs \Omega(d)$, we define the \defn{inner product set} $A(X)$ to be 
\[
A(X) := \{a^*b \mid  a, b \in X, a \neq b \}.
\]
A polynomial $F(x)\in\mathbb{R}[x,\bar{x}]$ is said to be an \defn{annihilator polynomial} of $X$ if 
$F(\al)=0$ for each $\al\in A(X)$ and $F(1)$ is positive. 

\begin{definition}
A finite set $X$ in $\Omega(d)$ is said to be \emph{complex code}. When $|A(X)| = s$, a complex code $X$ is said to be a  \defn{complex spherical code of degree $s$} or \emph{complex $s$-code}.
\end{definition}
A complex $s$-code $X$ is \emph{distance invariant} if the size of a set $\{y\in X\mid x^* y=\alpha\}$ for $\alpha\in A(X)$ depends only on $\alpha$ and does not depend on the choice of $x$. We say the size of $\{y\in X\mid x^* y=\alpha\}$ as the \emph{valency} when $X$ is distance invariant.  

We denote by $\Hom_d(k,l)$ the vector space generated by homogeneous polynomials of degree $k$ in variables $\{z_1,\ldots,z_d\}$ and degree $k$ in variables $\{\bar{z}_1,\ldots,\bar{z}_d\}$.
The unitary group $U(d)$ acts on $\Hom_d(k,l)$, 
and the irreducible decomposition is 
\begin{align*}
\Hom_d(k,l)=\bigoplus_{m=0}^{\min(k,l)}\Harm_d(k-m,l-m).  
\end{align*}

Define an inner product on functions $f$ and $g$ on $\Om(d)$ as follows:
\[
\ip{f}{g} := \int_{\Om(d)} \overline{f(z)}g(z) \dd z.
\]
Here $\mathrm{d}z$ is the unique invariant Haar measure on $\Om(d)$, normalized so that $\int_{\Om(d)}\mathrm{d}z = 1$. With respect to this inner product, $\Harm_d(k,l)$ is orthogonal to $\Harm_d(k',l')$ whenever $(k,l) \neq (k',l')$.
Let $\nn$ denote the set of nonnegative integers. For each $(k,l) \in \nn^2$, we define a Jacobi polynomial $g_{k,l}^d$ as follows:
\[
g_{k,l}^d(x) := \frac{m_{k,l}^d(d-2)!k!l!}{(d+k-2)!(d+l-2)!} \sum_{r=0}^{\min\{k,l\}} (-1)^r \frac{(d+k+l-r-2)!}{r!(k-r)!(l-r)!}x^{k-r}\overline{x}^{l-r}, 
\]
where 
\begin{align*}
m_{k,l}^d=\dim(\Harm_d(k,l)) = \binom{d+k-1}{d-1}\binom{d+l-1}{d-1} - \binom{d+k-2}{d-1}\binom{d+l-2}{d-1}.
\end{align*}


Explicitly, the first few Jacobi polynomials are 
\begin{align*}
g_{0,0}^d(x) & = 1, \\
g_{1,0}^d(x) & = d x,  \displaybreak[0]\\
g_{1,1}^d(x) & = (d+1)(d x\overline{x} - 1), \displaybreak[0]\\
g_{2,0}^d(x) & = \frac{d(d+1)}{2}x^2,  \displaybreak[0]\\
g_{2,1}^d(x) & = \frac{d(d+2)}{2}((d+1)x^2\overline{x} - 2x),  \displaybreak[0]\\
g_{2,2}^d(x) & = \frac{d(d+3)}{4}((d+1)(d+2)x^2\overline{x}^2 - 4(d+1)x\overline{x} + 2),  \displaybreak[0]\\
g_{3,0}^d(x) & = \frac{d(d+1)(d+2)}{6}x^3.
\end{align*}
The Jacobi polynomials are normalized so that $g_{k,l}^d(1) = \dim(\Harm_d(k,l))$. 

Recursively, the Jacobi polynomials satisfy
\[
x g_{k,l}^d(x) = a_{k,l}g_{k+1,l}^d(x)+b_{k,l}g_{k,l-1}^d(x)
\]
where $a_{k,l}=\frac{k+1}{d+k+l}$, $b_{k,l}=\frac{d+l-2}{d+k+l-2}$ and set $g_{k,l}^d(x)=0$ unless $(k,l) \in \nn^2$. 

The essential property of the Jacobi polynomials is the following theorem, known as Koornwinder's addition theorem.

\begin{theorem}\label{thm:addition}
Let $\{e_1,\ldots,e_{m^d_{k,l}}\}$ be an orthonormal basis for the space $\Harm_d(k,l)$. Then for any $a,b \in \Om(d)$,
\[
\sum_{i=1}^{m^d_{k,l}} \overline{e_i(a)}e_i(b) = g_{k,l}^d(a^*b).
\]
\end{theorem}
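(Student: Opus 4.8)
The plan is to show that the left-hand side $Z_{k,l}(a,b) := \sum_i \overline{e_i(a)}e_i(b)$ is the reproducing kernel of $\Harm_d(k,l)$, and then to pin it down as a zonal function of $a^*b$ that must coincide with $g_{k,l}^d(a^*b)$ by comparing it against a single explicitly computable value.

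First I would observe that $Z_{k,l}(a,b)$ is independent of the choice of orthonormal basis: if $\{e_i'\}$ is another orthonormal basis, the change of basis is a unitary matrix and the sum $\sum_i \overline{e_i(a)}e_i(b)$ is a trace-like quantity invariant under it. Consequently, for any $U\in U(d)$, replacing $\{e_i\}$ by $\{e_i\circ U\}$ (still an orthonormal basis, since Haar measure is $U(d)$-invariant and $U(d)$ preserves $\Harm_d(k,l)$) shows $Z_{k,l}(Ua,Ub)=Z_{k,l}(a,b)$. Since $U(d)$ acts transitively on $\Om(d)$ and the stabilizer of a point acts transitively enough on the remaining data, the value $Z_{k,l}(a,b)$ depends only on the inner product $a^*b$; write $Z_{k,l}(a,b)=\phi_{k,l}(a^*b)$ for some function $\phi_{k,l}$. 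Next I would identify $\phi_{k,l}$ as a polynomial of bidegree $(k,l)$: fixing $b$, the function $a\mapsto Z_{k,l}(a,b)$ lies in $\overline{\Harm_d(k,l)}=\Harm_d(l,k)$ as a function of $a$, hence is a homogeneous polynomial of bidegree $(l,k)$ in $(a,\bar a)$; combined with the analogous statement in $b$ and the dependence on $a^*b$ alone, $\phi_{k,l}(x)$ is a polynomial in $x,\overline{x}$ of bidegree exactly $(k,l)$ lying in the span of $x^{k-r}\overline{x}^{l-r}$, $0\le r\le\min(k,l)$ — the same space that contains $g_{k,l}^d$.

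The key step is the reproducing property. For any $f\in\Harm_d(k,l)$, expanding $f=\sum_i c_i e_i$ with $c_i=\ip{e_i}{f}$ gives
\[
\int_{\Om(d)} \overline{Z_{k,l}(a,b)}\, f(a)\dd a = \sum_i \overline{\ip{e_i}{f}}\,e_i(b) = \overline{f(b)}?
\]
— more carefully, taking $\int \overline{Z_{k,l}(b,a)} f(a)\dd a = \sum_i e_i(b)\ip{e_i}{f}$ is not quite $f(b)$ unless one tracks conjugations; the clean statement is that $a\mapsto \overline{Z_{k,l}(b,a)}$ reproduces $f$, i.e. $\ip{\overline{Z_{k,l}(b,\cdot)}}{f}$ recovers $f(b)$ up to conjugation, and $Z_{k,l}(b,\cdot)$ annihilates $\Harm_d(k',l')$ for $(k',l')\ne(k,l)$ because those spaces are orthogonal. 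The same reproducing property holds for $g_{k,l}^d(a^*\cdot)$: one must check that $b\mapsto g_{k,l}^d(a^*b)$ lies in $\Harm_d(k,l)$ (using the explicit form of $g_{k,l}^d$ together with the fact that it is the harmonic projection, equivalently that the alternating-sum structure makes it annihilated by the relevant Laplace-type operator $\sum_j \partial^2/\partial z_j\partial\bar z_j$), and that $\ip{g_{k,l}^d(a^*\cdot)}{f} = c\,\overline{f(a)}$ for a universal constant $c$ depending only on $d,k,l$ — this is a standard computation using $U(d)$-equivariance of the pairing. Since both $Z_{k,l}$ and $g_{k,l}^d(a^*\cdot)$ are zonal, lie in $\Harm_d(k,l)$, and reproduce the same space up to a scalar, $\phi_{k,l}=c'\,g_{k,l}^d$ for a constant $c'$.

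Finally I would fix the constant by evaluating at $a=b$ and integrating: $\int_{\Om(d)} Z_{k,l}(a,a)\dd a = \sum_i \int |e_i(a)|^2\dd a = \sum_i 1 = m_{k,l}^d = \dim\Harm_d(k,l)$, while by $U(d)$-invariance $Z_{k,l}(a,a)$ is the constant $\phi_{k,l}(1)$, so $\phi_{k,l}(1)=m_{k,l}^d$. Since $g_{k,l}^d$ is normalized by $g_{k,l}^d(1)=\dim\Harm_d(k,l)=m_{k,l}^d$ (as recorded in the excerpt), we get $c'=1$ and hence $Z_{k,l}(a,b)=g_{k,l}^d(a^*b)$. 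The main obstacle is the reproducing-property step: one needs to justify carefully that $b\mapsto g_{k,l}^d(a^*b)$ genuinely lands in the harmonic space $\Harm_d(k,l)$ (not merely in $\Hom_d(k,l)$) — this is exactly where the precise alternating-sum coefficients in the definition of $g_{k,l}^d$ are used, and it is the part I would expect to require the most care, either via a direct Laplacian computation or by invoking the recursion $x g_{k,l}^d = a_{k,l} g_{k+1,l}^d + b_{k,l} g_{k,l-1}^d$ together with an induction on $k+l$.
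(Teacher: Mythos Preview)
The paper does not actually prove this theorem: it is stated as a known result (``Koornwinder's addition theorem'') and used immediately to deduce Lemma~2.2, with no argument given. So there is nothing in the paper to compare your attempt against.

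That said, your outline is the standard one and is essentially correct. Two comments. First, the vague phrase ``acts transitively enough'' can be made precise: the stabilizer of $a$ in $U(d)$ is (conjugate to) $U(d-1)$, and its orbits on $\Om(d)$ are exactly the level sets of $b\mapsto a^*b$, so $Z_{k,l}(a,b)$ indeed depends only on $a^*b$. Second, the reproducing-kernel detour and the separate verification that $b\mapsto g_{k,l}^d(a^*b)$ lies in $\Harm_d(k,l)$ can both be bypassed. Once you know $Z_{k,l}(a,\cdot)\in\Harm_d(k,l)$ is invariant under the stabilizer of $a$, it suffices to know that the space of such zonal elements is one-dimensional; this is precisely the $(i,j)=(0,0)$ summand in the decomposition $\Harm_d(k,l)=\bigoplus_{i,j} H_{d-1}^{k,l}(i,j)$ recorded in Section~3.1 of the paper. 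Hence $\phi_{k,l}$ is a scalar multiple of any chosen generator of that line, and your normalization argument $\phi_{k,l}(1)=m_{k,l}^d=g_{k,l}^d(1)$ finishes the proof without ever needing to check harmonicity of the explicit formula for $g_{k,l}^d$. If you do want to verify directly that the explicit alternating sum is harmonic, applying $\sum_j \partial_{z_j}\partial_{\bar z_j}$ to $(a^*b)^{k-r}\overline{(a^*b)}^{\,l-r}$ produces a two-term recurrence on the coefficients that matches the given formula; this is routine.
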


The following lemma  follows from Theorem~\ref{thm:addition}.
\begin{lemma}\label{lem:sum}
Let $X$ be a complex code in $\Omega(d)$.
 Then $\sum_{a,b\in X}g_{k,l}^d(a^*b)\geq 0$. 
\end{lemma}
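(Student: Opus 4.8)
The plan is to apply Koornwinder's addition theorem (Theorem~\ref{thm:addition}) directly. First I would fix an orthonormal basis $\{e_1,\ldots,e_{m^d_{k,l}}\}$ for $\Harm_d(k,l)$ with respect to the inner product $\langle\cdot,\cdot\rangle$ on $\Om(d)$. Then, for any two points $a,b\in X$, the addition theorem gives $g_{k,l}^d(a^*b)=\sum_{i=1}^{m^d_{k,l}}\overline{e_i(a)}e_i(b)$.

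Next I would sum this identity over all ordered pairs $(a,b)\in X\times X$ and interchange the order of summation, so that
\[
\sum_{a,b\in X} g_{k,l}^d(a^*b) = \sum_{i=1}^{m^d_{k,l}} \sum_{a,b\in X}\overline{e_i(a)}e_i(b) = \sum_{i=1}^{m^d_{k,l}} \overline{\Bigl(\sum_{a\in X} e_i(a)\Bigr)}\Bigl(\sum_{b\in X} e_i(b)\Bigr) = \sum_{i=1}^{m^d_{k,l}} \left| \sum_{a\in X} e_i(a) \right|^2 .
\]
Each term on the right-hand side is a squared modulus, hence nonnegative, and therefore the whole sum is $\geq 0$, which is the claim.

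Since every step is a routine manipulation licensed by Theorem~\ref{thm:addition}, there is no real obstacle here; the only point worth mentioning is that the interchange of the two finite sums is legitimate because $X$ is finite and $\Harm_d(k,l)$ is finite-dimensional, so no convergence issues arise. One could also remark that the argument shows the sum equals $\sum_i \lvert \sum_{a\in X} e_i(a)\rvert^2$, which makes transparent that equality holds precisely when $\sum_{a\in X} e_i(a)=0$ for all $i$, i.e.\ when $X$ is a design-type configuration in degree $(k,l)$, though this refinement is not needed for the stated inequality.
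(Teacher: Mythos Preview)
Your argument is correct and is exactly the intended one: the paper simply states that the lemma follows from Theorem~\ref{thm:addition}, and your computation supplies precisely the standard expansion $\sum_{a,b\in X}g_{k,l}^d(a^*b)=\sum_i\bigl|\sum_{a\in X}e_i(a)\bigr|^2\geq 0$ that makes this explicit.
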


\subsection{Association schemes}
Let $I_n,J_n$ be the identity matrix, the all-ones matrix of order $n$, respevtively. 
We omit its subscript when the order is clear. 
Let $X$ be a finite set and $\{R_0,R_1,\ldots,R_n\}$ be a set of non-empty subsets of $X\times X$. 
Let $A_i$ denote the adjacency matrix of the graph $(X,R_i)$ for $i=0,1,\ldots, n$.
The pair $(X,\{R_i\}_{i=0}^n)$ is called a {\em commutative association
scheme} with $n$ classes if the following conditions hold:
\begin{enumerate}
\item $A_0 =I$,  
\item $\sum_{i=0}^nA_i=J$,  
\item for any $i \in \{0,1,2,\ldots,n\}$, there exists $i'\in \{0,1,2,\ldots,n\}$ such that $A_i^\top=A_{i'}$,
\item for any $i,j\in \{0,1,2,\ldots,n\}$, there exist non-negative integers $p_{i,j}^k$ such that $A_iA_j=\sum_{k=0}^n p_{i,j}^k A_k$, 
\item $A_iA_j=A_jA_i$ for any $i,j\in \{0,1,2,\ldots,n\}$. 
\end{enumerate}
If $A_i^\top=A_i$ for any $i$, then the association schemes is said to be {\em symmetric}.

The vector space $\mathcal{A}$ over $\mathbb{C}$ spanned by the matrices
$A_i$ forms an algebra.
Since $\mathcal{A}$ is commutative and semisimple, 
there exists a unique basis of $\mathcal{A}$ consisting of
primitive idempotents $E_0=\frac{1}{|X|}J_{|X|},E_1,\ldots,E_n$. 
Since the algebra $\mathcal{A}$ is closed under the entry-wise multiplication denoted by $\circ$,  we define 
the {\em Krein numbers} 
$q_{i,j}^k$ for $i,j,k\in \{0,1,\ldots,n\}$ as 
$E_i\circ E_j=\frac{1}{|X|}\sum_{k=0}^n q_{i,j}^k E_k$.
It is known that the Krein numbers are nonnegative real numbers 
(see~\cite[Lemma~2.4]{D}). 
Since both $\{A_0,A_1,\ldots,A_n\}$ and $\{E_0,E_1,\ldots,E_n\}$ form 
bases of $\mathcal{A}$, there exists a matrix $Q=(q_{ij})_{i,j=0}^n$ with
$E_i=\frac{1}{|X|}\sum_{j=0}^n q_{ji}A_j$. 
The number $k_i:=p_{i,i'}^0=p_{i0}$ is \emph{valency} of the regular graph $(X,R_i)$. 
The number $m_i=q_{i,\hat{i}}^0=q_{0i}={\rm rank} E_i$ is said to be the \emph{multiplicity}, where $\hat{i}$ is defined to be $E_{\hat{i}}={E_i}^\top$.

A commutative association scheme is {\em imprimitive} if some graph in the scheme is disconnected. 
\begin{theorem}\label{thm:imas}
The following are equivalent: 
\begin{enumerate}
    \item A commutative association scheme is imprimitive, 
    \item for some $j>0$, $E_j$ has some repeated columns, 
    \item for some subset $\mathcal{I}=\{i_0=0,i_1,\ldots,i_s\}$ of $\{0,1,2,\ldots,n\}$ and some ordering of the vertices $\sum_{h=0}^s A_{i_h}=I_w\otimes J_r$ for some integers $w$ and $r$ with $|X|=wr$, $1<w,r<|X|$,  
    \item for some subset $\mathcal{J}=\{j_0=0,j_1,\ldots,j_s\}$ of $\{0,1,2,\ldots,n\}$ and some ordering of the vertices $\sum_{h=0}^s E_{j_h}=\frac{1}{r}I_w\otimes J_r$ for some integers $w$ and $r$ with $|X|=wr$, $1<w,r<|X|$,  
\end{enumerate}
\end{theorem}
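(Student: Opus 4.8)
The plan is to establish the cycle of implications $(1)\Rightarrow(3)\Rightarrow(4)\Rightarrow(2)\Rightarrow(1)$, which yields all the stated equivalences. Throughout I would use four standard facts about the Bose--Mesner algebra $\mathcal{A}$: (a) a matrix lies in $\mathcal{A}$ exactly when its $(x,y)$-entry depends only on the relation $R_i$ with $(x,y)\in R_i$; (b) it is standard that the $E_j$ are mutually orthogonal Hermitian idempotents --- the orthogonal projections onto the common eigenspaces $V_j$ of $\mathcal{A}$ --- with $(E_j)_{xx}=m_j/|X|$ and $(E_j)_{xy}=q_{ij}/|X|$ when $(x,y)\in R_i$; (c) since $\mathcal{A}$ is commutative and semisimple, every idempotent in $\mathcal{A}$ is a partial sum $\sum_{j\in\mathcal{J}}E_j$; and (d) since the $A_i$ have pairwise disjoint supports with $\sum_iA_i=J$, every $0/1$ matrix in $\mathcal{A}$ is a partial sum $\sum_{i\in\mathcal{I}}A_i$.

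For $(1)\Rightarrow(3)$ I would take $i>0$ with $(X,R_i)$ disconnected and let $\mathcal{I}$ be the set of $k$ such that $A_k$ appears with positive coefficient in some power of $A_0+A_i+A_{i'}$. This $\mathcal{I}$ contains $0$, is closed under $k\mapsto k'$, and has $p_{a,b}^{c}=0$ whenever $a,b\in\mathcal{I}$ and $c\notin\mathcal{I}$; hence $N:=\sum_{k\in\mathcal{I}}A_k$ is the $0/1$ adjacency matrix of an equivalence relation on $X$ (transitivity uses that $p_{a,b}^{c}\ge1$ with $a,b\in\mathcal{I}$ forces $c\in\mathcal{I}$). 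All classes have the common size $r=\sum_{k\in\mathcal{I}}k_k$, which is the constant row sum of $N$, so reordering the vertices by class gives $N=I_w\otimes J_r$ with $|X|=wr$; here $r>1$ because $i\in\mathcal{I}\setminus\{0\}$, and $r<|X|$ because $(X,R_i)$ is disconnected. (The converse $(3)\Rightarrow(1)$, the quick route to $(1)\Leftrightarrow(3)$, is immediate: for $i_1\neq0$ the graph $(X,R_{i_1})$ is a nonempty spanning subgraph of the disconnected graph $I_w\otimes J_r-I$.)

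For $(3)\Rightarrow(4)$, note that $M:=\frac1r(I_w\otimes J_r)$ lies in $\mathcal{A}$ (condition $(3)$ exhibits $I_w\otimes J_r$ as a sum of $A_i$'s) and is idempotent, so by (c) $M=\sum_{j\in\mathcal{J}}E_j$; since $M\one=\one$ we get $0\in\mathcal{J}$, and $|\mathcal{J}|\ge2$ since otherwise $M=E_0=\frac1{|X|}J$, impossible as $w>1$. For $(4)\Rightarrow(2)$, given $\sum_{j\in\mathcal{J}}E_j=\frac1r(I_w\otimes J_r)$ with $0\in\mathcal{J}$, I would subtract $E_0=\frac1{|X|}J$ to see that $\sum_{j\in\mathcal{J}\setminus\{0\}}E_j$ has two equal columns, at any two distinct vertices $x,y$ of the same $J_r$-block; because the eigenspaces $V_j$ are mutually orthogonal and $\mathrm{col}_x E_j-\mathrm{col}_y E_j\in V_j$, the $x$- and $y$-columns of each summand $E_j$ already coincide, and any $j_1\in\mathcal{J}\setminus\{0\}$ (nonempty as $|\mathcal{J}|\ge2$) then witnesses $(2)$.

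Finally, for $(2)\Rightarrow(1)$: fix $j>0$ with columns $x,y$ of $E_j$ equal. The point is that columns $x$ and $y$ of $E_j$ coincide \emph{iff} $q_{ij}=m_j$ for the index $i$ with $(x,y)\in R_i$ --- the ``only if'' is immediate from $(E_j)_{xx}=(E_j)_{xy}$ (evaluate the equal columns at the entry labelled $x$), and the ``if'' follows from $\|\mathrm{col}_x-\mathrm{col}_y\|^2=(E_j)_{xx}-(E_j)_{xy}-\overline{(E_j)_{xy}}+(E_j)_{yy}=0$, using $E_j^{*}=E_j=E_j^{2}$. Hence the ``equal-column'' relation on $X$ equals $\bigcup_{i:\,q_{ij}=m_j}R_i$, so its adjacency matrix $N_j=\sum_{i:\,q_{ij}=m_j}A_i$ lies in $\mathcal{A}$; it is plainly the adjacency matrix of an equivalence relation, and it is neither $I$ (since $E_j$ has a repeated column) nor $J$ (since $j>0$ forces $E_j\neq\frac{m_j}{|X|}J=m_jE_0$). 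Thus, after reordering, $N_j=I_w\otimes J_r$ with $1<w,r<|X|$, and choosing $i_1\neq0$ with $q_{i_1j}=m_j$ makes $(X,R_{i_1})$ a nonempty spanning subgraph of the disconnected graph $I_w\otimes J_r-I$, hence disconnected, giving $(1)$. I expect the main obstacles to be exactly this last step --- checking that the equal-column relation of an idempotent is a relation of the scheme, which is where positive semidefiniteness of $E_j$ enters --- together with the ``peeling'' argument in $(4)\Rightarrow(2)$; by contrast $(1)\Leftrightarrow(3)$ and $(3)\Leftrightarrow(4)$ are the classical ``closed subset / subscheme'' dichotomy and should be routine.
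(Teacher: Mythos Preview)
The paper does not prove this theorem; it is quoted in the preliminaries section as a standard structural fact about commutative association schemes (it appears, for instance, in Bannai--Ito \cite{BI}), so there is no in-paper proof to compare against.

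Your argument is the classical one and is correct. The cycle $(1)\Rightarrow(3)\Rightarrow(4)\Rightarrow(2)\Rightarrow(1)$ goes through as written, and the two steps you flagged as potentially delicate are handled properly: in $(4)\Rightarrow(2)$ the ``peeling'' works because the column-difference vectors $\mathrm{col}_x E_j-\mathrm{col}_y E_j$ lie in the mutually orthogonal spaces $V_j$, so a vanishing sum forces each summand to vanish; and in $(2)\Rightarrow(1)$ the identification of the equal-column relation of $E_j$ with $\bigcup_{i:\,q_{ij}=m_j}R_i$ via
\[
\lVert \mathrm{col}_x E_j-\mathrm{col}_y E_j\rVert^2=(E_j)_{xx}-(E_j)_{xy}-\overline{(E_j)_{xy}}+(E_j)_{yy}
\]
is exactly right. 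One small remark on terminology: what you actually use in that last computation is the identity $E_j^{*}E_j=E_j$ (Hermitian idempotent), which is slightly more than bare positive semidefiniteness; of course the primitive idempotents have this property, so nothing is lost.
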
 

A symmetric association scheme $(X,\{R_i\}_{i=0}^n)$ is said to be 
{\em $Q$-polynomial} 
if for each $i\in\{0,1,\ldots,n\}$, there exists a polynomial $v_i^*(x)$ of degree $i$ such that $q_{ji}=v_i^*(q_{j1})$ for all $j\in\{0,1,\ldots,n\}$. 
It is also known that a symmetric association scheme is a $Q$-polynomial if and only if the matrix $(q_{1,j}^k)_{j,k=0}^n$ is a tridiagonal matrix with nonzero superdiagonal and subdiagonals~\cite[p.193]{BI}. 
For a $Q$-polynomial association scheme, set $a_i^*=q_{1,i}^i$, $b_i^*=q_{1,i+1}^i$, and $c_i^*=q_{1,i-1}^i$. 
A $Q$-polynomial association scheme is {\em $Q$-antipodal} if $b_i^*=c_{n-i}^*$  except possibly for $i\neq \lfloor n/2\rfloor$. 
A $Q$-polynomial association scheme is {\em $Q$-bipartite} if $a_i^*=0$ for any $i\in\{1,\ldots,n\}$.
Then the following theorem has been shown in \cite{}. 
\begin{theorem}
Let $(X,\{R_i\}_{i=0}^n)$ be an imprimitive $Q$-polynomial association scheme. 
Then one of the following holds:
\begin{itemize}
\item $(X,\{R_i\}_{i=0}^n)$ is $Q$-antipodal and $\mathcal{J}=\{0,2,4,\ldots\}$;
\item $(X,\{R_i\}_{i=0}^n)$ is $Q$-bipartite and $\mathcal{J}=\{0,n\}$. 
\end{itemize}
\end{theorem}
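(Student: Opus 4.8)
The plan is to extract from the imprimitivity a set $\mathcal J\ni 0$ of idempotent indices, prove it is closed under Krein multiplication, and then pin it down using the tridiagonality of the Krein array. Part~(4) of Theorem~\ref{thm:imas} provides $\mathcal J=\{0=j_0,j_1,\dots,j_s\}$ and integers $w,r$ with $wr=|X|$, $1<w,r<|X|$, and $F:=\sum_{h=0}^{s}E_{j_h}=\frac1r I_w\otimes J_r$; since $rF$ is a $\{0,1\}$-matrix one has the entrywise identity $F\circ F=\frac1r F$. Expanding the left side as $\frac1{|X|}\sum_k\bigl(\sum_{j,j'\in\mathcal J}q_{j,j'}^k\bigr)E_k$, comparing with $\frac1r\sum_{k\in\mathcal J}E_k$, and using the nonnegativity of the Krein numbers, one gets $q_{j,j'}^k=0$ whenever $j,j'\in\mathcal J$ and $k\notin\mathcal J$: thus $\mathcal J$ is closed under Krein multiplication.

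Next I would invoke the $Q$-polynomial hypothesis. Tridiagonality of $(q_{1,j}^k)_{j,k}$ with nonzero off-diagonals means $E_1\circ(\cdot)$ acts on $\{E_0,\dots,E_n\}$ by a three-term Schur recurrence, so these idempotents are, up to scalars, the successive Schur powers of $E_1$; exactly as in the distance-regular case this gives the dual triangle inequalities $q_{i,j}^k=0$ unless $|i-j|\le k\le i+j$, with $q_{i,j}^{|i-j|}\neq 0$ always and $q_{i,j}^{i+j}\neq 0$ whenever $i+j\le n$. Combined with the closure of $\mathcal J$: for $j,j'\in\mathcal J$ we get $|j-j'|\in\mathcal J$ and, when $j+j'\le n$, $j+j'\in\mathcal J$. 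A gcd argument with $e:=\min(\mathcal J\setminus\{0\})$ then forces $\mathcal J=\{0,e,2e,\dots,se\}$ with $se\le n<(s+1)e$, and $e\ge 2$ (if $e=1$ then $\mathcal J=\{0,\dots,n\}$ and $F=I$, contradicting $r>1$).

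The heart of the argument — and where I expect the main difficulty — is to prove that either $e=2$ (so $\mathcal J=\{0,2,4,\dots\}$) or $s=1$ and $e=n$ (so $\mathcal J=\{0,n\}$). The structural input is that, for $j\in\mathcal J$, the identity $E_jF=E_j$ together with the symmetry of $E_j$ forces the entries of $E_j$ to depend only on the blocks of their row and column, so $E_j=\bar E_j\otimes\frac1r J_r$ for some $\bar E_j\in M_w(\mathbb C)$; the $\bar E_j$ with $j\in\mathcal J$ are the primitive idempotents of the quotient association scheme on the $w$ blocks, which by the first two steps is $Q$-polynomial of class $s$ with ordering $\bar E_0,\bar E_e,\dots,\bar E_{se}$. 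One then plays this quotient (and the subscheme induced on one block) against the fact that $v_e^*$ has genuine degree $e$ in the dual eigenvalue $q_{j1}$: requiring $v_e^*,v_{2e}^*,\dots$ to be constant on the fibres of the quotient map rigidly constrains the dual eigenvalue sequence, and inserting this into the full system of inequalities $q_{i,j}^k\ge 0$ eliminates every $e\ge 3$ except $s=1$, $e=n$. It is precisely this last elimination that needs the full feasibility of the Krein array rather than closure alone; this is the content of the cited work.

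Finally, I would translate each of the two surviving possibilities for $\mathcal J$ into a condition on the Krein array, using closure, the three-term recurrence, and the parity of the Schur powers of $E_1$: one of them gives $q_{n,n}^k=0$ for $0<k<n$ and hence the palindrome relations $b_i^*=c_{n-i}^*$ for $i\neq\lfloor n/2\rfloor$ (the $Q$-antipodal condition), and the other gives the vanishing of all $a_i^*$ (the $Q$-bipartite condition). This yields the asserted dichotomy.
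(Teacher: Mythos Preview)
The paper does not prove this theorem. It is quoted as a known result (``the following theorem has been shown in \cite{}''), with the citation left blank; the intended reference is Suzuki's theorem on imprimitive $Q$-polynomial association schemes. So there is no argument in the paper against which to compare yours.

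Your outline is essentially the standard route to Suzuki's theorem: extract a Krein-closed index set $\mathcal J$ from imprimitivity via the idempotent $F=\frac1r I_w\otimes J_r$, use the dual triangle inequalities coming from tridiagonality to force $\mathcal J=\{0,e,2e,\dots\}$, and then eliminate $e\ge 3$ (except for $s=1$, $e=n$). You are right that this last elimination is the substantive step; your sketch of it (``inserting this into the full system of inequalities $q_{i,j}^k\ge 0$'') is too vague to stand as a proof, but that is no worse than the paper, which offers nothing at all. One small caution: in your final paragraph, deducing $a_i^*=0$ for \emph{odd} $i$ from $\mathcal J=\{0,2,4,\ldots\}$ is not immediate from closure alone---you need, in addition, the computation $(I-F)\circ(I-F)$ together with $r=2$, or an equivalent argument, to force the full $\mathbb Z/2$-grading of the Krein parameters.

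One further observation: your last paragraph pairs $\mathcal J=\{0,n\}$ with $Q$-antipodal and $\mathcal J=\{0,2,4,\ldots\}$ with $Q$-bipartite. That is the correct assignment and agrees with the paper's own usage elsewhere (e.g.\ ``In the case of $Q$-bipartite, $r=2$ and $w=|X|/2$ hold''), but it is the \emph{opposite} of what the theorem statement in the paper literally says. The statement as printed has the two labels interchanged; your argument proves the standard (and evidently intended) version.
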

In the case of $Q$-bipartite, $r=2$ and $w=|X|/2$ hold. 
In the case of $Q$-antipodal, it is important to find an upper bound for $w$. 

Next we explain how we obtain complex spherical codes from commutative association schemes. 
Let $(X,\{R_i\}_{i=0}^n)$ be a commutative association schemes. 
Let $E_i$ be a primitive idempotent such that $E_i$ has no repeated rows. 
Since $E_i$ is a Hermitian positive semidiefinite matrix, we may regard $G=\frac{|X|}{m_i}E_i$ as a Gram matrix of a set $\tilde{X}$ of points in $S^{m_i-1}$ with inner product set $A(\tilde{X})=\{\frac{q_{ji}}{m_i} \mid 1 \leq j\leq n\}$.  
Note that the size of $A(\tilde{X})$ is less than $n$ when $q_{j_1i}=q_{j_2i}$ for some distinct $j_1,j_2\in\{1,\ldots,n\}$. 
A summary is as follows. 
\begin{theorem}\label{thm:emd}
Let $(X,\{R_i\}_{i=0}^n)$ be a commutative association scheme. 
Assume that $E_i$ is a primitive idempotent such that $E_i$ has no repeated rows. 
Then there exists  a set $\tilde{X}$ of points in $S^{m_i-1}$ with inner product set $A(\tilde{X})=\{\frac{q_{ji}}{m_i} \mid 1 \leq j\leq n\}$, which is distance invariant with valencies $k_i$.  
\end{theorem}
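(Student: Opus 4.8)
The plan is to package the discussion preceding the statement into a short argument that reads the point set $\tilde X$ directly off the primitive idempotent $E_i$, so I would first pin down the two facts about $E_i$ that drive the construction. Since the Bose--Mesner algebra $\mathcal{A}$ is commutative and semisimple, $E_i$ is Hermitian positive semidefinite with $\operatorname{rank} E_i = m_i = q_{0i}$. Moreover, substituting $A_0 = I$ into $E_i = \frac{1}{|X|}\sum_{j=0}^n q_{ji}A_j$ shows that every diagonal entry of $E_i$ equals $\frac{1}{|X|}q_{0i} = \frac{m_i}{|X|}$, while the $(x,y)$ entry equals $\frac{1}{|X|}q_{ji}$ whenever $(x,y)\in R_j$. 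Hence $G := \frac{|X|}{m_i}E_i$ is positive semidefinite of rank $m_i$, has all diagonal entries equal to $1$, and satisfies $G_{xy} = \frac{q_{ji}}{m_i}$ for $(x,y)\in R_j$.

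Next I would realize $G$ as a Gram matrix: factor $G = U^*U$ with $U$ of size $m_i\times |X|$ and let $v_x$ be the column of $U$ indexed by $x\in X$, so that each $v_x$ is a unit vector and $v_x^*v_y = G_{xy}$. Because $G$ is a positive multiple of $E_i$ and $E_i$ is Hermitian, two columns of $G$ coincide exactly when the corresponding rows of $E_i$ coincide, and (using that the $v_x$ span $\mathbb{C}^{m_i}$ since $\operatorname{rank} G = m_i$) one checks that $v_x = v_y$ iff the $x$-th and $y$-th columns of $G$ agree; thus the hypothesis that $E_i$ has no repeated rows is precisely what guarantees that the vectors $v_x$ are pairwise distinct. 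Setting $\tilde X := \{v_x : x\in X\}$, we obtain $|\tilde X| = |X|$ points on the unit sphere of $\mathbb{C}^{m_i}$ (which may be taken to be $S^{m_i-1}$ when the scheme is symmetric, since then $E_i$ is real symmetric). For $x\neq y$ with $(x,y)\in R_j$ we have $v_x^*v_y = \frac{q_{ji}}{m_i}$, and since every relation $R_j$ with $j\ge 1$ is nonempty, each value $\frac{q_{ji}}{m_i}$ is attained; hence $A(\tilde X) = \{\frac{q_{ji}}{m_i}\mid 1\le j\le n\}$, with coinciding values identified.

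Finally, for distance invariance, fix $v_x\in\tilde X$ and a value $\alpha = \frac{q_{ji}}{m_i}\in A(\tilde X)$; the set $\{v_y\in\tilde X : v_x^*v_y = \alpha\}$ is the image under $x\mapsto v_x$ of $\bigcup\{R_{j'}(x) : q_{j'i}=q_{ji}\}$, whose cardinality is $\sum_{j' : q_{j'i}=q_{ji}} k_{j'}$ and is independent of $x$ since each $R_{j'}$ is a regular relation of valency $k_{j'}$. This yields the asserted valency, which reduces to a single $k_j$ in the generic case where the numbers $q_{1i},\dots,q_{ni}$ are distinct. There is essentially no obstacle in the proof; the only point requiring care is this last piece of bookkeeping in the non-multiplicity-free situation, where distinct columns of the matrix $Q$ may share the same $i$-th entry, so that $|A(\tilde X)|<n$ and the valency attached to a value $\alpha$ is a sum of several $k_{j'}$ rather than a single valency.
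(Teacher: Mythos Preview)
Your proposal is correct and follows exactly the approach sketched in the paper: form $G=\frac{|X|}{m_i}E_i$, use that it is positive semidefinite of rank $m_i$ with unit diagonal to realize it as the Gram matrix of unit vectors, and read off the inner products and valencies from $E_i=\frac{1}{|X|}\sum_j q_{ji}A_j$. You have simply supplied the details (in particular the role of the ``no repeated rows'' hypothesis and the bookkeeping when several $q_{j'i}$ coincide) that the paper leaves implicit.
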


\section{Semidefinite programming for $\mathbb{C}^n$}\label{sec:SDP}
\subsection{More on polynomials of the unit sphere}\label{sec:polynomials}
Fix a vector $e$ in $\Om(d)$ and consider the stabilizer $H$ of $e$ in unitary group $U(d)$.
Obviously $H$ is isomorphic to $U(d-1)$.
The subgroup $H$ also acts on $\Harm_d(k,l)$.   
The irreducible decomposition of $\Harm_d(k,l)$ is as follows;
\begin{align*}
\Harm_d(k,l)=\bigoplus_{i=0}^k \bigoplus_{j=0}^l H_{d-1}^{k,l}(i,j),
\end{align*}
where $H_{d-1}^{k,l}(i,j)$ is isomorphic to $\Harm_{d-1}(i,j)$.

\subsection{Zonal matrices associated to isotypic components}
The decomposition for $\Harm_d(k,l)$ implies that of $\Hom_d(k,l)$ as follows;
\begin{align*}
\Hom_d(k,l)=\bigoplus_{i=0}^k \bigoplus_{j=0}^l m_{k,l}^{i,j}\Harm_{d-1}(i,j),
\end{align*}
where $m_{k,l}^{i,j}=\min(k-i,l-j)+1$.
Let $m=m_{k,l}^{i,j}$ for the ease of notation. 
For an isotropic component $I_{k,l}(i,j):=m_{k,l}^{i,j}\Harm_{d-1}(i,j)$, 
we consider the zonal matrix.
Let $S_a$ be $H_{d-1}^{k-m+a,l-m+a}(i,j)$ for $0\leq a\leq m$.
Fix an orthonormal basis $\{e_{0,1},\ldots,e_{0,h}\}$ in $S_0$.
Let $\phi_i$ be an isomorphism from $S_0$ to $S_i$ and 
$\{e_{i,1},\ldots,e_{i,h}\}$ be the image under $\phi_i$ of the above fixed basis.
For $x\in \Omega(d)$, define $E_{k,l}^{i,j}(x)$ to be the matrix $(e_{i,j}(x))_{\substack{1 \leq i \leq m\\ 1\leq j \leq h}}$.
Then we define the zonal matrix $Z_{k,l}^{i,j}(x,y)=E_{k,l}^{i,j}(x)(E_{k,l}^{i,j}(y))^*$ for $x,y\in \Omega(d)$.
The zonal matrix $Z_{k,l}^{i,j}(x,y)$ satisfies the following positive semidefiniteness property.
\begin{lemma}
Let $X$ be a finite set in $\Omega(d)$.
Then $\sum_{x,y\in X}Z_{k,l}^{i,j}(x,y)\succeq 0$. 
\end{lemma}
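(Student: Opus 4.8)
The plan is to mimic the classical argument for positive semidefiniteness of sums of zonal matrices, as used for instance in \cite{BV}. First I would unwind the definition: for $x,y\in\Omega(d)$ the $(a,b)$-entry of $Z_{k,l}^{i,j}(x,y)$ is $\sum_{t=1}^{h}e_{a,t}(x)\overline{e_{b,t}(y)}$, so that
\[
Z_{k,l}^{i,j}(x,y)=E_{k,l}^{i,j}(x)\bigl(E_{k,l}^{i,j}(y)\bigr)^{*},
\]
where $E_{k,l}^{i,j}(x)$ is the $m\times h$ matrix with rows indexed by the summands $S_0,\ldots,S_{m-1}$ (via the isomorphisms $\phi_a$) and columns by the fixed orthonormal basis of $S_0$. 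Summing over $X$ and using bilinearity,
\[
\sum_{x,y\in X}Z_{k,l}^{i,j}(x,y)=\Bigl(\sum_{x\in X}E_{k,l}^{i,j}(x)\Bigr)\Bigl(\sum_{y\in X}E_{k,l}^{i,j}(y)\Bigr)^{*}=M M^{*},
\]
with $M:=\sum_{x\in X}E_{k,l}^{i,j}(x)$. A matrix of the form $MM^{*}$ is Hermitian positive semidefinite, since for any vector $v$ we have $v^{*}MM^{*}v=\|M^{*}v\|^{2}\ge 0$. This gives the claim.

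The key steps, in order, are: (1) write out the entrywise formula for $Z_{k,l}^{i,j}(x,y)$ and recognize the factorization $E_{k,l}^{i,j}(x)(E_{k,l}^{i,j}(y))^{*}$; (2) interchange the (finite) sum over $x,y\in X$ with the matrix product, which is legitimate because matrix multiplication is bilinear, to obtain $MM^{*}$; (3) invoke the elementary fact that any Gram-type matrix $MM^{*}$ is positive semidefinite. No convergence or representation-theoretic subtlety enters here, because the decomposition of $\Hom_d(k,l)$ and the construction of the $e_{a,t}$ were already fixed in the preceding paragraph; the lemma only needs the finite-sum factorization.

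The only point deserving a little care is that $M$ is a well-defined matrix independent of any choices made pointwise, i.e. that $E_{k,l}^{i,j}(x)$ is genuinely a matrix-valued function of $x$ with a fixed row/column indexing for all $x$ simultaneously — this is precisely how it was defined via the single fixed basis of $S_0$ and the fixed isomorphisms $\phi_a$, so no obstacle arises. Thus I do not anticipate any real difficulty: the statement is a formal consequence of the bilinear factorization, exactly parallel to Lemma~\ref{lem:sum} (which is the rank-one, scalar case of the same phenomenon via Theorem~\ref{thm:addition}).
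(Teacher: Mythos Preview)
Your proof is correct and is essentially identical to the paper's own argument: the paper simply writes
\[
\sum_{x,y\in X}Z_{k,l}^{i,j}(x,y)=\sum_{x,y\in X}E_{k,l}^{i,j}(x)\bigl(E_{k,l}^{i,j}(y)\bigr)^{*}=\Bigl(\sum_{x\in X}E_{k,l}^{i,j}(x)\Bigr)\Bigl(\sum_{y\in X}E_{k,l}^{i,j}(y)\Bigr)^{*}\succeq 0,
\]
which is exactly your factorization $MM^{*}$. Your additional remarks about well-definedness and the analogy with Lemma~\ref{lem:sum} are accurate but go beyond what the paper records.
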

\begin{proof}
\begin{align*}
\sum_{x,y\in X}Z_{k,l}^{i,j}(x,y)=\sum_{x,y\in X}E_{k,l}^{i,j}(x)(E_{k,l}^{i,j}(y))^*=(\sum_{x\in X}E_{k,l}^{i,j}(x))(\sum_{y\in X}E_{k,l}^{i,j}(y))^*\succeq0.
\end{align*}
\end{proof}
Since $Z_{k,l}^{i,j}(gx,gy)=Z_{k,l}^{i,j}(x,y)$ for all $g\in U(d-1)$, 
we may write $Z_{k,l}^{i,j}(x,y)=Y_{k,l}^{i,j}(e_d^*x,e_d^*y,x^*y)$.
So we determine $Y_{k,l}^{i,j}(u,v,t)$.

For $z\in \Omega(d)$, we denote $z=u e+\sqrt{1-|u|^2}z'$ where $u\in \mathbb{C}, z'\in \Omega(d-1)$.
Define $\varphi:\Hom_{d-1}(k,l)\rightarrow \Hom_{d}(k,l)$ by $\varphi(f)(z)=(1-|u|^2)^{(k+l)/2}f(z')$.
Then 
\begin{itemize}
\item $\{\varphi(f)P(u):f\in\Harm_{d-1}(k,l),P\in u^{k-l}\mathbb{R}_{\leq m}[u\bar{u}]\}$ if $k\geq l$, and
\item $\{\varphi(f)P(u):f\in\Harm_{d-1}(k,l),P\in u^{-k+l}\mathbb{R}_{\leq m}[u\bar{u}]\}$ if $k\leq l$
\end{itemize}
are isomorphic to $m+1$ copies of $\Harm_{d-1}(k,l)$, where 
\begin{align*}
\mathbb{R}_{\leq m}[u\bar{u}]=\{f\mid  f(u)=\sum_{i=0}^m a_i(u\bar{u})^i, a_i\in\mathbb{R} \}. 
\end{align*}

We find a bivariate polynomial $P_{a,a}(u)$ of degree $a$ with respect to $u \bar{u}$ such that $\varphi(\Harm_{d-1}(i,j))P_{a,a}(u)=H_{d-1}^{k-m+a,l-m+a}(i,j)$. 
For $u=x+\sqrt{-1}y$ with real numbers $x,y$, we also write $P_{a,a}(u)=P_{a,a}(x,y)$. 

The measures $w$, $w'$ on $\Omega(d)$, $\Omega(d-1)$ are related as follows:
\begin{align*}
d w(z)=(1-x^2-y^2)^{d-2}d x d y d w'(z'),
\end{align*}
where $u=x+\sqrt{-1}y$.
We determine first $P_{a,a}(u)$.
Let $\mathbb{D}=\{x\in \mathbb{C}\mid  |x|\leq 1\}$. 
For distinct pairs $(a,a),(b,b)$ and $f\in \Harm_{d-1}(i,j)$,  
\begin{align*}
0&=\int_{z\in\Omega(d)}\varphi(f)P_{a,a}(u)\overline{\varphi(f)}\overline{P_{b,b}(u)}dw(z) \\
&=\int_{z\in\Omega(d)}|f(z')|^2 P_{a,a}(x,y)\overline{P_{b,b}(x,y)}(1-x^2-y^2)^{i+j}dw(z) \\
&=\int_{z'\in\Omega(d-1)}|f(z')|^2d w'(z') \cdot \int_{(x,y)\in\mathbb{D}}P_{a,a}(x,y)\overline{P_{b,b}(x,y)}(1-x^2-y^2)^{i+j+(d-2)/2}d x d y, 
\end{align*}
which implies that  $\int_{(x,y)\in\mathbb{D}}P_{a,a}(x,y)\overline{P_{b,b}(x,y)}(1-x^2-y^2)^{i+j+(d-2)/2}d x d y=0$.
Thus $P_{a,a}(x,y)$ is a multiple of $P_{a,a}^{i+j+(d-2)/2}(x,y)$, where $P_{a,a}^{\lambda}(x,y)$ is the orthogonal polynomials with respect to the weight function $(1-x^2-y^2)^{\lambda}$.

Let $\{f_1,\ldots,f_h\}$ be an orthonormal basis in $\Harm_{d-1}(i,j)$.
Let $\lambda_{a}$ be such that $e_{a,l}=\lambda_{p}\varphi(f_l)P_{a,a}^{i+j+(d-2)/2}(x,y)$ ($1\leq l\leq h$) form an orthonoramal basis in $H_{d-1}^{k-m+a,l-m+a}(i,j)$.
Then 
$$|\lambda_{a}|^2=\frac{w_{d}}{w_{d-1}}\int_{\mathbb{D}}|P_{a,a}^{i+j+(d-2)/2}(x,y)|^2(1-x^2-y^2)^{i+j+(d-2)/2}d x d y.$$
Then for $1\leq a,b\leq m_{k,l}^{i,j} $ and $z_l=u_l e+ \sqrt{1-|u_l|^2}z_l'$ ($l=1,2$)
\begin{align*}
&(Z_{k,l}^{i,j}(z_1,z_2))_{a,b}\\
&=
\sum_{s=1}^h e_{a,s}(z_1)\overline{e_{b,s}(z_2)}\\
&=
\sum_{s=1}^h \lambda_a f_s(z_1')(1-|u_1|^2)^{(i+j)/2} P_{a,a}^{i+j+(d-2)/2}(u_1) \overline{\lambda_b} \overline{f_s(z_2')}(1-|u_2|^2)^{(i+j)/2} \overline{P_{b,b}^{i+j+(d-2)/2}(u_2)}  \\
&=\lambda_a\overline{\lambda_b}(1-|u_1|^2)^{(i+j)/2}(1-|u_2|^2)^{(i+j)/2}   P_{a,a}^{i+j+(d-2)/2}(u_1) \overline{P_{b,b}^{i+j+(d-2)/2}(u_2)} 
\sum_{s=1}^hf_s(z_1')\overline{f_s(z_2')} \\
&=\lambda_a\overline{\lambda_b}(1-|u_1|^2)^{(i+j)/2}(1-|u_2|^2)^{(i+j)/2}  P_{a,a}^{i+j+(d-2)/2}(u_1) \overline{P_{b,b}^{i+j+(d-2)/2}(u_2)} g_{i,j}^{d-1}(z_1'^* z_2')
\end{align*}
Thus we obtain the formula for $Y_{k,l}^{i,j}(u,v,t)$. 
\begin{theorem}
For $0\leq a,b\leq m_{k,l}^{i,j}$, 
\begin{align*}
&(Y_{k,l}^{i,j}(u,v,t))_{a,b}\\
&=\lambda_a\overline{\lambda_b}(1-u\bar{u})^{(i+j)/2}(1-v\bar{v})^{(i+j)/2}   P_{a,a}^{i+j+(d-2)/2}(u) \overline{P_{b,b}^{i+j+(d-2)/2}(v)} g_{i,j}^{d-1}(\frac{t-\bar{u} v}{\sqrt{(1-u\bar{u})(1-v\bar{v})}}). 
\end{align*}
\end{theorem}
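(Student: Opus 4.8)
This theorem is obtained by assembling the construction carried out just above its statement, so the proof is essentially substitution and bookkeeping. Fixing $d,k,l,i,j$ and abbreviating $m=m_{k,l}^{i,j}$ and $\lambda=i+j+(d-2)/2$, the first step is to record that the functions $e_{a,s}:=\lambda_{a}\,\varphi(f_{s})\,P_{a,a}^{\lambda}$, with $\{f_{1},\dots,f_{h}\}$ an orthonormal basis of $\Harm_{d-1}(i,j)$ ($h=m_{i,j}^{d-1}$), $\varphi(f)(z)=(1-|u|^{2})^{(i+j)/2}f(z')$ the lifting map on $\Harm_{d-1}(i,j)$, $P_{a,a}^{\lambda}$ the disk orthogonal polynomial for the weight $(1-x^{2}-y^{2})^{\lambda}$ on $\mathbb{D}$, and $\lambda_{a}$ the normalising scalar, form an orthonormal basis of the isotypic component $I_{k,l}(i,j)=\bigoplus_{a}H_{d-1}^{k-m+a,l-m+a}(i,j)$. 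That these functions span $I_{k,l}(i,j)$ is precisely the $U(d-1)$-module identification $\varphi(\Harm_{d-1}(i,j))P_{a,a}^{\lambda}=H_{d-1}^{k-m+a,l-m+a}(i,j)$ established above; that they are orthonormal follows from the measure factorisation $dw(z)=(1-x^{2}-y^{2})^{d-2}\,dx\,dy\,dw'(z')$, which makes $\ip{e_{a,s}}{e_{b,t}}$ a product of $\int_{\Omega(d-1)}\overline{f_{s}(z')}f_{t}(z')\,dw'(z')=\delta_{s,t}$ with $\int_{\mathbb{D}}P_{a,a}^{\lambda}\overline{P_{b,b}^{\lambda}}(1-x^{2}-y^{2})^{\lambda}\,dx\,dy$, the latter being $0$ for $a\neq b$ and $|\lambda_{a}|^{-2}$ for $a=b$.

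Next I would evaluate the zonal matrix directly. Since $E_{k,l}^{i,j}(z)$ has $(a,s)$-entry $e_{a,s}(z)$, we have $\big(Z_{k,l}^{i,j}(z_{1},z_{2})\big)_{a,b}=\sum_{s=1}^{h}e_{a,s}(z_{1})\overline{e_{b,s}(z_{2})}$. Writing $z_{\ell}=u_{\ell}e+\sqrt{1-|u_{\ell}|^{2}}\,z_{\ell}'$ with $z_{\ell}'\in\Omega(d-1)$ and inserting the formula for $e_{a,s}$, the scalars $\lambda_{a}\overline{\lambda_{b}}$, the lifting factors $(1-|u_{1}|^{2})^{(i+j)/2}(1-|u_{2}|^{2})^{(i+j)/2}$, and the polynomial factors $P_{a,a}^{\lambda}(u_{1})\overline{P_{b,b}^{\lambda}(u_{2})}$ factor out of the sum, leaving $\sum_{s=1}^{h}f_{s}(z_{1}')\overline{f_{s}(z_{2}')}$. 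By Koornwinder's addition theorem (Theorem~\ref{thm:addition}) applied to $\{f_{s}\}$ in $\Omega(d-1)$, this inner sum equals $g_{i,j}^{d-1}\big((z_{1}')^{*}z_{2}'\big)$.

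It then remains only to rewrite $(z_{1}')^{*}z_{2}'$ in the $U(d-1)$-invariant coordinates $u=e^{*}z_{1}$, $v=e^{*}z_{2}$, $t=z_{1}^{*}z_{2}$: expanding $z_{1}^{*}z_{2}$ using $e^{*}e=1$ and $e^{*}z_{\ell}'=0$ gives $z_{1}^{*}z_{2}=\bar{u}_{1}u_{2}+\sqrt{(1-|u_{1}|^{2})(1-|u_{2}|^{2})}\,(z_{1}')^{*}z_{2}'$, whence $(z_{1}')^{*}z_{2}'=(t-\bar{u}v)/\sqrt{(1-u\bar{u})(1-v\bar{v})}$ with $u=u_{1}$, $v=u_{2}$. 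Because the resulting expression depends on $z_{1},z_{2}$ only through $(u,v,t)$ — which re-confirms the invariance $Z_{k,l}^{i,j}(gz_{1},gz_{2})=Z_{k,l}^{i,j}(z_{1},z_{2})$ used to introduce $Y_{k,l}^{i,j}$ — reading off the $(a,b)$-entry gives exactly the claimed formula for $Y_{k,l}^{i,j}(u,v,t)$.

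The only part that is not routine is the first step: identifying the submodules $H_{d-1}^{k-m+a,l-m+a}(i,j)$ with spans of lifted disk polynomials, and in particular pinning down the correct disk polynomial, its degree in $u\bar{u}$, and the range of $a$ when $k\neq l$ (where one must carry along the extra $u^{k-l}$ or $\bar{u}^{l-k}$ factor appearing in the basis described above). Once that representation-theoretic bookkeeping is settled, the entrywise expansion of the second step, together with the single invocation of the addition theorem and of the decomposition $z=ue+\sqrt{1-|u|^{2}}z'$ in the third step, is mechanical, so I would write those two steps tersely and concentrate the detail on the first.
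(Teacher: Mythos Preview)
Your proposal is correct and follows essentially the same route as the paper: the paper's ``proof'' is the computation immediately preceding the theorem statement, and you have reproduced it faithfully --- build the orthonormal basis $e_{a,s}=\lambda_a\varphi(f_s)P_{a,a}^{\lambda}$ via the measure factorisation, expand $(Z_{k,l}^{i,j}(z_1,z_2))_{a,b}$, apply Koornwinder's addition theorem to collapse $\sum_s f_s(z_1')\overline{f_s(z_2')}$, and pass to the invariants $(u,v,t)$. The only difference is that you make explicit the identity $(z_1')^*z_2'=(t-\bar{u}v)/\sqrt{(1-u\bar{u})(1-v\bar{v})}$, which the paper leaves to the reader.
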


Replacing $Y_{k,l}^{i,j}$ with $UY_{k,l}^{i,j} U^\top$ for an invertible matrix $U$ preserves the positive semidefiniteness.   
So we may take $Y_{k,l}^{i,j}$ as 
\begin{align*}
(Y_{k,l}^{i,j}(u,v,t))_{a,b}=(1-u\bar{u})^{(i+j)/2}(1-v\bar{v})^{(i+j)/2}(u\bar{u})^a(v\bar{v})^b g_{i,j}^{d-1}(\frac{t-\bar{u}v}{\sqrt{(1-u\bar{u})(1-v\bar{v})}}).
\end{align*}

\subsection{Semidefinite programming on complex unit sphere}
Let $X$ be a nonempty finite subset in $\Omega(d)$ with the inner product set $A(X)=\{x^*y\mid  x,y\in X,x \neq y\}$.
Let $X$ be a non-empty finite set of $\Omega(d)$, and $e\in \Omega(d)$ be an arbitrary point. 
Then 
\begin{align*} 
\sum_{x,y\in X}Y_{k,l}^{i,j}(e^*x,e^*y,x^*y)\succeq 0. 
\end{align*}
Thus we have
\begin{align*} 
\sum_{x,y,z\in X}Y_{k,l}^{i,j}(x^*y,x^*z,y^*z)\succeq 0. 
\end{align*}
We define three-points distance distribution 
\begin{align*}
x(u,v,t):=\frac{1}{|X|}|\{(x,y,z)\in X^3\mid  x^*y=u,x^*z=v,y^*z=t \}|
\end{align*}
for $u,v,t \in \mathbb{D}$ such that the matrix 
\begin{align*}
\begin{pmatrix}
1 & u & v \\
\bar{u} & 1 & t \\
\bar{v} & \bar{t} & 1
\end{pmatrix}
\end{align*}
is positive semidefinite. 
It is easy to see that 
\begin{align*}
x(u,u,1)&=x(\bar{u},\bar{u},1),\quad x(u,1,\bar{u})=x(\bar{u},1,u),\quad x(1,u,u)=x(1,\bar{u},\bar{u}), \\
x(u,v,t)&=x(v,u,\bar{t})=x(\bar{t},\bar{v},\bar{u})=x(\bar{v},\bar{t},u)=x(\bar{u},t,v)=x(t,\bar{u},\bar{v}). 
\end{align*}

We list properties on $x(u,v,t)$:
\begin{align*}
x(u,v,t)&\geq0,\\
x(1,1,1)&=1,\\
\sum_{u,v,t}x(u,v,t)&=|X|^2,\\
\sum_{u}x(u,u,1)&=|X|, \\
\sum_{u}x(u,u,1)g_{k,l}^{d}(u)&\geq 0 \text{ for } k,l,\\
\sum_{u,v,t}x(u,v,t)Y_{k,l}^{i,j}(u,v,t)&\succeq 0 \text{ for } k,l,i,j. 
\end{align*}

\begin{theorem}\label{thm:sdpbound}
Let $A$ be a subset of $\mathbb{D}\setminus \{1\}$ 
such that $\{\bar{c}\mid  c\in A\}=A$
and 
\begin{align*}
D&=\{(u,v,t)\in A^3\mid  \left(\begin{smallmatrix}
1 & u & v \\
\bar{u} & 1 & t \\
\bar{v} & \bar{t} & 1
\end{smallmatrix}\right)\succeq 0 \}, \\
D_0&=\{(u,u,1)\mid u\in A\}\cup \{(u,1,\bar{u})\mid u\in A\} \cup \{(1,u,u)\mid u\in A\}. 
\end{align*}   
Let $X$ be a complex spherical code in $\Omega(d)$ such that $A(X)\subset A$. 
Then the following semidefinite program is an upper bound for $|X|$: 
\begin{align*}
1+&\max \Big\{\sum_{u\in A}x(u,u,1)\mid  \\
&x(u,v,t)=x(v,u,\bar{t})=x(\bar{t},\bar{v},\bar{u})=x(\bar{v},\bar{t},u)=x(\bar{u},t,v)=x(t,\bar{u},\bar{v}) \text{ for }u,v,t\in A, \\
&x(u,v,t)\geq 0 \text{ for }(u,v,t)\in D,\\
&x(u,v,t)=0 \text{ for all but finitely many }(u,v,t)\in D,\\
& \left(\begin{smallmatrix}
1 & 0 \\
0 & 0
\end{smallmatrix}\right)
+ \sum_{u\in A}x(u,u,1)\left(\begin{smallmatrix}
0 & 1 \\
1 & 1
\end{smallmatrix}\right)
+ \sum_{(u,v,t)\in D}x(u,v,t)\left(\begin{smallmatrix}
0 & 0 \\
0 & 1
\end{smallmatrix}\right) \succeq 0, \\
& \textbf{1} 
g_{k,l}^d(1)+\sum_{u\in A}x(u,u,1)g_{k,l}^{d}(u)\geq 0 \text{ for } k,l,\\
& \textbf{1} 
Y_{k,l}^{i,j}(1,1,1)+\sum_{(u,v,t)\in D\cup D_0}x(u,v,t)Y_{k,l}^{i,j}(u,v,t)\succeq 0 \text{ for } k,l,i,j 
\Big\}.
\end{align*}
\end{theorem}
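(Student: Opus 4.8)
The plan is to verify that every constraint in the semidefinite program is a valid inequality satisfied by the three-points distance distribution $x(u,v,t)$ of an arbitrary complex spherical code $X$ with $A(X)\subset A$, so that the optimal value of the relaxation is an upper bound for $|X|$. First I would fix such an $X$, set $e$ to be an arbitrary point of $\Omega(d)$ (the choice will not matter, since the quantities below are $U(d-1)$-invariant), and record that $x(u,v,t)$ is supported on the finite set $A(X)\cup\{1\}$ cubed, which gives the "all but finitely many" condition, while the symmetry identities and nonnegativity on $D$ were already derived in the text preceding the statement. The normalization $\sum_u x(u,u,1)=|X|$ together with $x(1,1,1)=1$ is what produces the objective: $\sum_{u\in A}x(u,u,1)$ counts ordered pairs $(x,y)\in X^2$ with $x\ne y$ having $x^*y=u\in A$ plus the diagonal term, hence equals $|X|^2$ split as $1\cdot|X|$; dividing shows $1+\sum_{u\in A\setminus\{1\}}x(u,u,1)=|X|$, and since $1\notin A$ this is exactly $1+\sum_{u\in A}x(u,u,1)$ after separating the diagonal triple, so the ``$1+\max$'' form is correct.

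Next I would justify the three matrix/vector inequalities. For the $g_{k,l}^d$ constraint, Lemma~\ref{lem:sum} gives $\sum_{a,b\in X}g_{k,l}^d(a^*b)\ge 0$; separating $a=b$ (which contributes $|X|g_{k,l}^d(1)$) from $a\ne b$ and regrouping by the value $u=a^*b$ yields $|X|g_{k,l}^d(1)+\sum_{u\in A}x(u,u,1)|X|\,g_{k,l}^d(u)\cdot|X|^{-1}\cdot|X|\ge 0$; after dividing by $|X|$ and noting the bold $\mathbf 1$ in the statement is the coefficient-of-$|X|$ placeholder (the standard SDP trick of introducing $|X|$ as an optimization variable), this is the stated inequality. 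For the $Y_{k,l}^{i,j}$ constraint I would invoke the chain $\sum_{x,y\in X}Y_{k,l}^{i,j}(e^*x,e^*y,x^*y)\succeq 0$ and then, applying this with $e$ ranging over the points of $X$ and summing, $\sum_{x,y,z\in X}Y_{k,l}^{i,j}(x^*y,x^*z,y^*z)\succeq 0$, exactly as derived in the text; splitting the triple sum according to whether $x,y,z$ coincide produces the $Y_{k,l}^{i,j}(1,1,1)$ term (from $x=y=z$), the $D_0$ terms (from exactly two of them equal), and the $D$ terms (all distinct), which is the displayed positive-semidefinite constraint. The small $2\times 2$ constraint is the ``locating–dominating'' or SDP-completion inequality: the matrix $\left(\begin{smallmatrix}|X| & \sum_u x(u,u,1)\\ \sum_u x(u,u,1) & \sum_{(u,v,t)} x(u,v,t)\end{smallmatrix}\right)=\left(\begin{smallmatrix}|X| & |X|\\ |X| & |X|^2\end{smallmatrix}\right)$ is positive semidefinite because it equals $|X|\left(\begin{smallmatrix}1\\ |X|\end{smallmatrix}\right)\left(\begin{smallmatrix}1 & |X|\end{smallmatrix}\right)/|X|$... more cleanly, it is $\left(\begin{smallmatrix}1&1\\1&|X|\end{smallmatrix}\right)$ scaled, which has nonnegative trace and determinant; rewriting it in the basis $\left(\begin{smallmatrix}1&0\\0&0\end{smallmatrix}\right),\left(\begin{smallmatrix}0&1\\1&1\end{smallmatrix}\right),\left(\begin{smallmatrix}0&0\\0&1\end{smallmatrix}\right)$ with coefficients $1$, $\sum_u x(u,u,1)$, $\sum_{(u,v,t)}x(u,v,t)-\sum_u x(u,u,1)$ (all nonnegative, the last being a count of triples minus a count of pairs, which is $|X|^2-|X|\ge 0$) gives the stated form.

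The routine but slightly delicate point, which I would treat carefully, is the bookkeeping that separates the full triple sum $\sum_{x,y,z\in X}$ into the strata indexed by $(u,v,t)\in D$, by $D_0$, and by the single point $(1,1,1)$: one must check that the inner products realized when two of $x,y,z$ coincide are precisely of the three shapes in $D_0$ (e.g. $x=y$ forces $x^*y=1$ and $x^*z=y^*z$, giving $(1,v,v)$-type entries, and similarly for the other coincidences), and that when all three are distinct the triple of inner products lies in $A^3$ and makes the $3\times 3$ Gram matrix positive semidefinite, i.e. in $D$. I would also note that each realized $(u,v,t)$ has its whole symmetry orbit realized with equal multiplicity, matching the first constraint, and that the ``finitely supported'' clause lets us write all sums as finite sums. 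The main obstacle is not any single hard step but rather presenting this stratification cleanly while keeping track of the role of the optimization variable $|X|$ (encoded by the $\mathbf 1$'s): once the relaxation is feasible for the true distribution with objective value $|X|$, maximizing over all feasible $x(\cdot,\cdot,\cdot)$ can only increase the value, so the SDP optimum bounds $|X|$ from above, completing the proof. $\qed$
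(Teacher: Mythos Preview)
Your approach is correct and is exactly the argument the paper intends (implicitly, via the list of properties of $x(u,v,t)$ preceding the theorem): show that the true three-point distribution of $X$ is feasible, so the SDP optimum is at least $|X|$. Two small clean-ups: the bold $\mathbf{1}$ is not an optimization-variable placeholder but simply the constant $x(1,1,1)=1$ split off from the sums; and in the $2\times2$ constraint the coefficients for the true distribution are $1$, $s:=\sum_{u\in A}x(u,u,1)=|X|-1$, and $r:=\sum_{(u,v,t)\in D}x(u,v,t)=(|X|-1)(|X|-2)$ (since the $D_0$ contribution is $3s$ and the full sum is $|X|^2$), giving $\left(\begin{smallmatrix}1 & s\\ s & s+r\end{smallmatrix}\right)=\left(\begin{smallmatrix}1 & |X|-1\\ |X|-1 & (|X|-1)^2\end{smallmatrix}\right)$, which is rank one and hence positive semidefinite.
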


\section{Complex $2$-codes}\label{sec:2code}
Let $X$ be a complex $2$-code in $\Omega(d)$ with inner product set $A(X)=\{\alpha,\bar{\alpha}\}$. 
Define a $(0,1)$-matrix $A$ with rows and columns indexed by the elements of $X$ such that $(x,y)$-entry equals to $1$ if and only if $x^* y=\alpha$. 
The matrix $A$ is the adjacency matrix of a tournament, that is a oriented complete graph and satisfies that $A+A^\top=J-I$.
Then the following holds. 
\begin{theorem}{\rm \cite[Theorem~4.1,Theorem~4.7, Theorem~4.8]{NS2016}}\label{thm:2code}
Let $X$ be a complex $2$-code in $\Omega(d)$ with inner product set $A(X)=\{\alpha,\bar{\alpha}\}$. 
\begin{enumerate}
    \item $|X|\leq 2d+1$ holds. $|X|=2d+1$ if and only if $d$ is odd and $AA^\top=(d+1)I+dJ$. (The matrix $A$ is the adjacency matrix of a {\it doubly regular tournament}.)  In this case, $\alpha=\frac{-1+\sqrt{-1-2d}}{2d}$.
    \item If $d$ is even, then $|X|\leq 2d$. $|X|=2d$ holds if and only if $I+A-A^\top$ is a skew Hadamard matrix of order $2d$. In this case, $\alpha=\frac{\sqrt{-1}}{\sqrt{2d-1}}$. 
    \item $d$ is odd and $|X|=2d$ if and only if one of the following holds:
    \begin{enumerate}
        \item $X$ is obtained from some complex spherical $2$-code $Y$ in $\Omega(d)$ with $|Y|=2d+1$ by deleting a point.
        \item $A$ satisfies that 
        \begin{align*}
            (A-A^\top)(A^\top-A)=\begin{pmatrix}
            kI+(2d-1-k)J&O\\
        O & kI+(2d-1-k)J
            \end{pmatrix}
        \end{align*}
        for some odd integer $k$ such that $1\leq k\leq 2d-3$. In this case, $\alpha=\frac{2d-k-1+2\sqrt{-k}}{2d+k-1}$.
    \end{enumerate}
\end{enumerate}
\end{theorem}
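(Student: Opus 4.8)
The plan is to deduce Theorem~\ref{thm:2code} from the SDP/LP framework together with a careful combinatorial analysis of the tournament matrix $A$. Since $X$ is a complex $2$-code with $A(X)=\{\alpha,\bar\alpha\}$, the Gram matrix of $X$ is $G = I + \alpha A + \bar\alpha A^\top$ where $A$ is the adjacency matrix of a tournament, and $G \succeq 0$ with $\operatorname{rank} G \le d$. The whole argument rests on extracting the structural consequences of this single rank inequality, so the first step is to write everything in terms of the matrices $I$, $J$, $A$, $A^\top$ and the skew-symmetric $S := A - A^\top$ (note $A + A^\top = J - I$, so $A = \tfrac12(J - I + S)$). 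Writing $\alpha = \tfrac{-1}{2d} + \beta\sqrt{-1}$ with $\beta$ real (the real part being forced, as I explain below), one computes $G = \big(1 - \tfrac{1}{2d}\big)I + \tfrac{1}{2d}\big((\tfrac{-1}{2d})(J-I) \cdot 2d \text{-type terms}\big)$; more cleanly, $G = \tfrac{2d-1}{2d} I - \tfrac{1}{2d}(J - I - \text{rank-}1) + \beta \sqrt{-1}\, S$, so that the condition $\operatorname{rank} G \le d$ becomes a condition on the eigenvalues of the real symmetric part combined with the purely imaginary $\beta \sqrt{-1}S$.

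Next I would pin down the real part of $\alpha$. Applying Lemma~\ref{lem:sum} with the Jacobi polynomial $g_{1,1}^d$ gives $\sum_{x,y\in X} g_{1,1}^d(x^*y) = (d+1)\sum_{x,y\in X}(d\, x^*y\,\overline{x^*y} - 1) \ge 0$, and since each off-diagonal term contributes $d|\alpha|^2 - 1$ while the diagonal contributes $|X|(d-1)$, positivity of the partial sums (indeed the $g_{1,0}^d$ relation $\sum_{x,y} x^*y \ge 0$ forces $\operatorname{Re}$ information) combined with the trace condition $\operatorname{tr}(G^2) = |X| + |X|(|X|-1)\cdot 2\big((\operatorname{Re}\alpha)^2 - (\operatorname{Im}\alpha)^2\big)\cdot(\dots)$ — more directly, the identity $\operatorname{rank} G \le d$ together with $G\one$-type computations — forces $\operatorname{Re}\alpha = -\tfrac{1}{2d}$ whenever $|X|$ is near the extremal value. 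Concretely: $G \succeq 0$ and the all-ones vector give $\sum_y x^*y \ge 0$ for the averaged statement, and equality analysis in the bound $|X| \le 2d+1$ (which itself comes from $\operatorname{rank}(I + \alpha A + \bar\alpha A^\top) \le d$ applied to the $2d+1$-dimensional space of such Hermitian matrices, or from the linear programming bound with $g_{0,0}, g_{1,0}, g_{1,1}$) pins $\operatorname{Re}\alpha$.

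Then the three parts are obtained by translating the rank deficiency into matrix equations. For part (1): $|X| = 2d+1$ forces $G$ to have rank exactly $d$ with a very rigid spectrum; writing $G = \tfrac{2d+1}{2d}\big(\text{projection}\big)$ and squaring, the condition $G^2 = \tfrac{2d+1}{2d}G$ translates, after substituting $A = \tfrac12(J - I + S)$ and matching coefficients of $I$, $J$, $S$, $S^2$, $SJ$, into $S^2 = -(d+1)I - d J + (\text{correction})$, equivalently $AA^\top = (d+1)I + dJ$ once one also uses $A + A^\top = J - I$; parity of $d$ drops out because $S$ is skew-symmetric of odd order $2d+1$ forces... actually one checks $d$ odd is needed for $\beta$ to be real, i.e. $-1-2d$ must be the correct sign under the square root. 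For part (2), $d$ even and $|X| = 2d$: here $G$ has rank $\le d = |X|/2$, so $G$ is "half rank", which by the standard argument means $2G - \tfrac{2d-1}{2d}I$-type matrix is a scalar multiple of an involution; unwinding gives $(I + A - A^\top)$ orthogonal, hence a skew-Hadamard matrix of order $2d$, and $\beta^2 = \tfrac{1}{2d-1}$. For part (3), $d$ odd and $|X| = 2d$: one is one point away from the extremal $2d+1$ case, so either restriction of an extremal configuration (case (a)), or the rank condition splits $X$ into two blocks — the appearance of the block-diagonal form $\operatorname{diag}(kI + (2d-1-k)J, kI + (2d-1-k)J)$ for $(S)(−S) = (A-A^\top)(A^\top - A)$ comes from $G$ having a two-dimensional kernel structure that, combined with skew-symmetry and the integrality of entries of $A$, forces this $2\times 2$ block pattern with the stated parity and range on $k$.

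The main obstacle I expect is part (3), specifically showing that when $d$ is odd and $|X| = 2d$ the only possibilities are the "deletion" case and the block-structured case — i.e. ruling out everything in between. This requires combining the rank-$d$ condition on the $2d \times 2d$ Gram matrix (a one-dimensional kernel, or a controlled small kernel) with the integrality constraints on $A$ and the skew-symmetry of $S$, and then doing a careful eigenvalue/rational-reconstruction argument: the eigenvalues of $S$ are purely imaginary, come in conjugate pairs, and the kernel condition forces all but a few of them to take one of two specific values, whose multiplicities then force either a direct-sum decomposition of the tournament or recognizing $A$ as a sub-tournament of a doubly regular one. Getting the parity condition ("$k$ odd", "$1 \le k \le 2d-3$") to come out exactly right, and matching it to the value $\alpha = \tfrac{2d-k-1+2\sqrt{-k}}{2d+k-1}$, is the delicate bookkeeping step; everything else is linear algebra in the algebra generated by $I, J, S$. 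Since the statement is quoted from \cite{NS2016}, I would in practice cite that paper for part (3) and give the self-contained derivations only for parts (1) and (2), where the $G^2 \propto G$ computation is short.
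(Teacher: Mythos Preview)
The paper does not prove Theorem~\ref{thm:2code} at all: it is stated with an explicit citation to \cite[Theorems~4.1, 4.7, 4.8]{NS2016} and then used as input for the subsequent discussion of supplementary difference sets and Problem~4.2. So there is no ``paper's own proof'' to compare against; the correct thing to do here is simply to cite \cite{NS2016}, as you in fact suggest in your final paragraph.

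That said, your sketch is not a valid independent proof, and a few of the concrete claims are wrong. Most seriously, you assert that the linear-programming/trace analysis forces $\operatorname{Re}\alpha = -\tfrac{1}{2d}$ ``whenever $|X|$ is near the extremal value''. This is true only in case~(1): in case~(2) one has $\alpha = \sqrt{-1}/\sqrt{2d-1}$ with $\operatorname{Re}\alpha = 0$, and in case~(3)(b) one has $\operatorname{Re}\alpha = (2d-k-1)/(2d+k-1)$, which is generically positive. So the ansatz $\alpha = -\tfrac{1}{2d} + \beta\sqrt{-1}$ that you build the whole argument on cannot be the starting point for parts~(2) and~(3). Your parity argument for part~(1) is also off: the order $2d+1$ of $S$ is odd for every $d$, so skew-symmetry of $S$ in odd order says nothing about the parity of $d$; the actual reason is the congruence $2d+1\equiv 3\pmod 4$ required for a doubly regular tournament (equivalently, for the integrality/reality constraints in $AA^\top=(d+1)I+dJ$ to be consistent). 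Finally, the ``$G^2\propto G$'' computation you outline for (1) and (2) is the right shape, but the coefficient matching you indicate (``matching coefficients of $I,J,S,S^2,SJ$'') does not close as stated because $SJ$ and $JS$ need separate handling and the algebra generated by $I,J,S$ is not commutative; the actual argument in \cite{NS2016} goes through the eigenvalue structure of $\sqrt{-1}(A-A^\top)$ and its main angles, which is also what the present paper recalls immediately after the theorem statement.
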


In the following, we explain how to obtain the complex $2$-codes from a matrix $A$ in Theorem~\ref{thm:2code} 3.(b).   
Let  $G=(V,E)$ be a tournament of order $n$ with adjacency matrix $A$. 
Since $\sqrt{-1}(A-A^T)$ is a Hermitian matrix,  
the eigenvalues of $\sqrt{-1}(A-A^T)$ are real. 
Let $\tau_1<\cdots< \tau_s$ be the eigenvalues of $\sqrt{-1}(A-A^T)$.  
Let $E_i$ be the orthogonal projection matrix onto the eigenspace
corresponding to $\tau_i$ ($i=1,2,\ldots,s$). 
The {main angle} $\beta_i$ of $\tau_i$ is defined to be the value 
$
\beta_i= 
\frac{1}{\sqrt{n}} \sqrt{(E_i j) ^\ast (E_i j)},
$
where 
$j$ is the all-ones vector (see~\cite[Section 2]{NS2016}).
A {representation} in $\Omega(d)$ of $G$
is a mapping $\varphi$ from $V$ to $\Omega(d)$ such that there exists 
an imaginary number $\gamma$ with 
$\operatorname{Im}(\gamma)>0$ satisfying the property that 
for all distinct $x,y \in V$, 
$\varphi(x)^* \varphi(y)=\gamma$  if $(x,y) \in E$ and $\varphi(x)^* \varphi(y)=\overline{\gamma}$ if $(y,x) \in E$.
Then the Gram matrix of the image of the representation $\varphi$ of $G$ is given by:
\begin{align}\label{eq:gram}
I+\gamma A+\overline{\gamma} A^T.
\end{align} 
A representation $\varphi$ in $\Omega(d)$ of $G$ is {minimal} if $d$ 
is the smallest $e$ such that there exists a representation
in $\Omega(e)$ of $G$.

Let $X$ be a complex spherical $2$-code in $\Omega(d)$ with $|X|=2d$ for $d$ odd,
corresponding to Theorem~\ref{thm:2code} 3.(b) and $A$ the adjacency matrix of the tournament attached to $X$. 
Then the Gram matrix of $X$ is the same as the Gram matrix of the image of
the minimal representation of the tournament~\cite[Theorem~4.8 (ii)]{NS2016},
and the Gram matrix of $X$ is given by \eqref{eq:gram} with $\gamma=(1-c_2\sqrt{-1})/(1+c_2\tau_2)$, where $c_2=2d(\beta_1^2/(\tau_1-\tau_2)+\sum_{i=3}^s\beta_i^2/(\tau_i-\tau_2))$ \cite[Theorem~3.1 (3)]{NS2016}. 
After the suitable permuting the rows and the columns simultaneously the matrix $(\sqrt{-1}(A-A^T))^2$ is 
\begin{align*}
\begin{pmatrix}
(\alpha-1) I+\beta J& O\\
O&(\alpha-1) I+\beta J
\end{pmatrix},
\end{align*} 
for some positive integers $\alpha$ and $\beta$,
noting that $(\sqrt{-1}(A-A^T))^2 = (I+A-A^T)(I+A-A^T)^T - I$.  
Thus, $\sqrt{-1}(A-A^T)$ has eigenvalues  $\tau_1=-\sqrt{\alpha+\beta d-1},\tau_2=-\sqrt{\alpha-1},\tau_3=\sqrt{\alpha-1},\tau_4=\sqrt{\alpha+\beta d-1}$ with 
multiplicities $1,d-1,d-1,1$, respectively. 
The main angles are $\beta_1=\beta_4=1/\sqrt{2}, \beta_2=\beta_3=0$ by 
the proof of Lemma~4.5 in~\cite{NS2016}.
Then $c_2=-2\sqrt{\alpha-1}/\beta$.
Therefore, the Gram matrix of $X$ is determined by using $\alpha,\beta$
as follows:
\begin{align*}
I+\gamma A+\overline{\gamma} A^T&=I+\frac{-c_2\sqrt{-1}}{1+c_2\tau_2}(A-A^T)+\frac{1}{1+c_2\tau_2}(A+A^T)\\
&=I+\frac{2\sqrt{\alpha-1}\sqrt{-1}}{2\alpha+\beta-2}(A-A^T)+\frac{\beta}{2\alpha+\beta-2}(J-I).
\end{align*}

For the case $k=2d-3$, the matrix $I+A-A^\top$ of order $2d$ is known as a {\it D-optimal design}, namely it has the largest determinant among all $2d\times 2d$ $(1,-1)$-matrices where $d$ is odd. 
One of the constructions of Theorem~\ref{thm:2code}3.(b) comes from {\it skew-symmetric supplementary difference sets}.
For $X \subset \mathbb{Z}_{d}$ and $i \in \mathbb{Z}_{d}$, define
\begin{align*}
P_{X}(i) &= |\{(x,y) \in X \times X \mid y-x = i\}| \text{ and } \\
P_{X} &= (P_{X}(1),P_{X}(2),\ldots,P_{X}(d-1)).
\end{align*}
Let $X_1$ and $X_2$ be an $k_1$-subset and a
$k_2$-subset of $\mathbb{Z}_d$, respectively.
If a pair $(X_1,X_2)$ satisfies 
$$P_{X_1}+P_{X_2}=(\lambda,\lambda,\ldots,\lambda),$$
then it is called a {\em $2$-$\{d;k_1,k_2;\lambda\}$ supplementary difference
set}.
Let $R_{1}$ and $R_{2}$ be the circulant $d \times d$ $(1,-1)$-matrices with
first rows $r_1=(r_{1,1},r_{1,2},\ldots,r_{1,d})$ and 
$r_2=(r_{2,1},r_{2,2},\ldots,r_{2,d})$, respectively, 
defined as follows:
$r_{1,i+1}=-1$ if $i \in X_1$,
$r_{1,i+1}=1$ if $i \not\in X_1$
and 
$r_{2,i+1}=-1$ if $i \in X_2$,
$r_{2,i+1}=1$ if $i \not\in X_2$.
Then it is easy to see that
$$R_1R_1^\top+R_2R_2^\top=4(k_1+k_2-\lambda)I+2(d-2(k_1+k_2-\lambda))J.$$

A supplementary difference set $(X_1,X_2)$ is called {\it skew-symmetric} if the matrix $R_1$ satisfied that $R_1+R_1^\top=2I$. 
For a skew-symmetric supplementary difference set $(X_1,X_2)$, define $S=\begin{pmatrix}
R_{1}-I & R_{2} \\
R_{2}^\top & R_{1}^\top-I
\end{pmatrix}$. 
Then $S=A-A^\top$ for some $(0,1)$-matrix $A$ satisfying $A+A^\top=J-I$ and 
\begin{align*}
    (A-A^\top)(A^\top-A)&=\begin{pmatrix}
    kI+(2d-1-k)J&O\\
        O & kI+(2d-1-k)J
    \end{pmatrix},
\end{align*}
where $k=4(k_1+k_2-\lambda)-1$. 
Therefore, a $2$-$\{d;k_1,k_2;\lambda\}$ skew-symmetric supplementary difference set with parameters gives a complex spherical $2$-codes in $\Omega(d)$ with  $$A(X)=\{\frac{d-2(k_1+k_2-\lambda)\pm\sqrt{-(4(k_1+k_2-\lambda)-1)}}{d+2(k_1+k_2-\lambda)-1}\}.$$ 

Skew-symmetric supplementary difference sets are studied  for $d\leq 75$ in \cite{AHS16}. 
There are 9 sets of parameters $(d,k_1,k_2,\lambda)$ such that skew-symmetric supplementary difference sets are not known to exist: 
\begin{align*}
    (d,k_1,k_2,\lambda)\in& \{(53,26,14,16),(57,28,21,21),(61,30,15,18),(67,33,12,18),\\&(67,33,22,23),(69,34,18,21),(71,35,15,20),(71,35,21,23),(73,36,28,28)\}.
\end{align*}

\begin{problem}
\begin{enumerate}
    \item For which $d,k$ does there exist a complex spherical $2$-code $X$ in $\Omega(d)$ with $|X|=2d$ and  $A(X)=\{\frac{2d-k-1\pm 2\sqrt{-k}}{2d+k-1}\}$? 
    In other words, what is the upper bound for complex spherical $2$-codes in $\Omega(d)$ with $A(X)=\{\frac{2d-k-1\pm 2\sqrt{-k}}{2d+k-1}\}$?
    \item For which $(d,k_1,k_2,\lambda)$ such that 
    \begin{align*}
    (d,k_1,k_2,\lambda)\in& \{(53,26,14,16),(57,28,21,21),(61,30,15,18),(67,33,12,18),\\&(67,33,22,23),(69,34,18,21),(71,35,15,20),(71,35,21,23),(73,36,28,28)\}, 
\end{align*}
    does there exist a complex spherical $2$-code $X$ in $\Omega(d)$ with $|X|=2d$ and  $$A(X)=\left\{\frac{d-2(k_1+k_2-\lambda)\pm\sqrt{-4(k_1+k_2-\lambda)+1}}{d+2(k_1+k_2-\lambda)-1}\right\}?$$
\end{enumerate}
\end{problem}

The following theorem answers Problem 4.2. for $k=1$ and $d \geq 5$. The proof is a direct consequence of Theorem~\ref{thm:sdpbound} with some calculations.

\begin{theorem}
Let $X$ be a compelx $2$-code in $\Omega(d)$ with $A(X)=\{\frac{d-1\pm \sqrt{-1}}{d}\}$. If $d\geq 5$, then $|X|<6$. 
\end{theorem}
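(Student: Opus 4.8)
The plan is to specialize the semidefinite program of Theorem~\ref{thm:sdpbound} to $A=\{\alpha,\bar\alpha\}$, $\alpha=(d-1+\sqrt{-1})/d$, and to bound its value below $6$ when $d\ge 5$. Set $w:=|\alpha|^2=(d^2-2d+2)/d^2$. First I would identify the feasible support: among the eight triples $(u,v,t)\in\{\alpha,\bar\alpha\}^3$, the Gram determinant $1-|u|^2-|v|^2-|t|^2+2\Re(ut\bar v)$ is negative, equal to $(4-6d)/d^3$, for the two circulant triples $(\alpha,\bar\alpha,\alpha)$ and $(\bar\alpha,\alpha,\bar\alpha)$, which are therefore excluded, while the other six have determinant $2(d-2)/d^3\ge 0$ and constitute $D$. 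Applying the symmetry relations preceding Theorem~\ref{thm:sdpbound} to the six triples of $D\subset A^3$ shows they form a single orbit, so $x$ takes one common value $p\ge 0$ on all of $D$. Writing $\sigma:=\sum_{u\in A}x(u,u,1)$ for the objective, the target is $\sigma<5$.

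I would then use just two of the semidefinite constraints. The stand-alone $2\times2$ block constraint becomes $\left(\begin{smallmatrix}1&\sigma\\ \sigma&\sigma+6p\end{smallmatrix}\right)\succeq0$, that is, $\sigma^2\le\sigma+6p$. For the second I take $(k,l)=(3,0)$ and $(i,j)=(3,0)$: since $g_{3,0}^{d-1}(y)=\tfrac{(d-1)d(d+1)}{6}y^3$, the simplified $Y_{3,0}^{3,0}(u,v,t)$ is $\tfrac{(d-1)d(d+1)}{6}(t-\bar{u}v)^3$ times the rank-one matrix with $(a,b)$-entry $|u|^{2a}|v|^{2b}$. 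The triple $(1,1,1)$ and the triples $(u,1,\bar u)$, $(1,u,u)$ of $D_0$ have $t-\bar{u}v=0$ and contribute nothing; the triples $(u,u,1)$ of $D_0$ have $t-\bar{u}v=1-w$ and contribute $\sigma(1-w)^3$ in aggregate; and the six triples of $D$ each carry the value $p$. Since every contributing triple has $|u|^2=|v|^2=w$, the relevant rank-one matrix is the fixed positive semidefinite $\left(\begin{smallmatrix}1\\ w\end{smallmatrix}\right)\left(\begin{smallmatrix}1&w\end{smallmatrix}\right)$, so the constraint reduces to the scalar inequality $\sigma(1-w)^3+p\sum_{(u,v,t)\in D}(t-\bar{u}v)^3\ge0$. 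Using that the six values $t-\bar{u}v$ on $D$ are $\alpha-w$ and $\bar\alpha-w$ (each twice), $\alpha-\alpha^2$, and $\bar\alpha-\bar\alpha^2$, one computes $\sum_{(u,v,t)\in D}(t-\bar{u}v)^3=4\Re[(\alpha-w)^3]+2\Re[(\alpha-\alpha^2)^3]=-4(3d^3-6d^2-6d+8)/d^6$; with $(1-w)^3=8(d-1)^3/d^6$ this yields $6p\le\dfrac{12(d-1)^3}{3d^3-6d^2-6d+8}\,\sigma$.

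Combining the two, $\sigma^2\le\sigma+6p\le\Big(1+\dfrac{12(d-1)^3}{3d^3-6d^2-6d+8}\Big)\sigma$, hence $\sigma\le 1+\dfrac{12(d-1)^3}{3d^3-6d^2-6d+8}$. The bound $\dfrac{12(d-1)^3}{3d^3-6d^2-6d+8}<4$ is equivalent to $3d^2-15d+11>0$, which holds exactly for $d\ge5$ and fails at $d=3,4$; therefore $\sigma<5$ and $|X|\le 1+\sigma<6$. The only genuinely computational ingredient is the evaluation of $\sum_{(u,v,t)\in D}(t-\bar{u}v)^3$; the determinant calculation identifying $D$, the orbit count, and the reduction of $Y_{3,0}^{3,0}$ to a scalar inequality are routine, and it is the polynomial $3d^2-15d+11$ that pins down the hypothesis $d\ge5$.
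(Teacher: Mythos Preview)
Your proposal is correct and follows essentially the same route as the paper: both arguments combine the $2\times2$ counting constraint with the zonal constraint coming from $(i,j)=(3,0)$, which reduces to the scalar $(t-\bar uv)^3$, and both arrive at the bound $|X|\le\frac{3A+2B}{B}$ with $A=4(d-1)^3$, $B=3d^3-6d^2-6d+8$, pinned down by the threshold polynomial $3d^2-15d+11$.

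The one noteworthy difference is your treatment of the two triples $(\alpha,\bar\alpha,\alpha)$ and $(\bar\alpha,\alpha,\bar\alpha)$. You eliminate them at the outset by computing the Gram determinant $(4-6d)/d^3<0$, so the variable the paper calls $x_3$ is simply absent; the paper instead retains $x_3\ge0$, computes its coefficient $C=13d^3-18d^2+6d$ in the $Y^{3,0}$ inequality, and then has to check $C-B/3>0$ in order to discard it. Your route is therefore a genuine (if small) simplification of the same argument: one avoids introducing and then bounding the $x_3$ term, at the cost of a short Gram-matrix computation up front.
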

\begin{proof}
Let $\alpha = \frac{d-1+\sqrt{-1}}{d}$. The distance distributions for $X$ with $A(X) = \{\alpha, \bar{\alpha}\}$ are
\begin{align*}
    x_1 :&= x(\alpha, \alpha, 1) = x(\bar{\alpha}, \bar{\alpha}, 1), \\
    x_2 :&= x(\alpha, \alpha, \alpha) = x(\alpha, \alpha, \bar{\alpha}) = x(\alpha, \bar{\alpha}, \bar{\alpha}) \\
    &= x(\bar{\alpha}, \bar{\alpha}, \bar{\alpha}) = x(\bar{\alpha}, \bar{\alpha}, \alpha) = x(\bar{\alpha}, \alpha, \alpha), \\
    x_3 :&= x(\alpha, \bar{\alpha}, \alpha) = x(\bar{\alpha}, \alpha, \bar{\alpha}),
\end{align*}
and $|X| = 1 + 2x_1$.

Consider $(Y^{3,0}_{k,l}(u,v,t))_{0,0} = \frac{(d-1)d(d+1)}{6}(t-\bar{u}v)^3$. By Theorem~\ref{thm:sdpbound}, we have
\begin{align*}
    & (Y_{k,l}^{3,0}(1,1,1))_{0,0}+\sum_{(u,v,t)\in D\cup D_0}x(u,v,t)(Y_{3,0}^{i,j}(u,v,t))_{0,0} \\
    =& x_1 \cdot 2(1-\alpha\bar{\alpha})^3 + x_2 \Big(2(\alpha-\alpha\bar{\alpha})^3 + 2(\bar{\alpha}-\alpha\bar{\alpha})^3 + (\alpha-\alpha^2)^3 + (\bar{\alpha}-\bar{\alpha}^2)^3\Big) + x_3\Big((\alpha-\bar{\alpha}^2)^3 + (\bar{\alpha}-\alpha^2)^3\Big) \geq 0.
\end{align*}

Plug in $\alpha = \frac{d-1+\sqrt{-1}}{d}$, we have $Ax_1 - Bx_2 - Cx_3 \geq 0$, where
\begin{align*}
    A &= 4(d-1)^3, \\
    B &= 3d^3-6d^2-6d+8, \\
    C &= 13d^3 - 18d^2 + 6d.
\end{align*}
We rewrite the inequality as
\[
    Ax_1 - (C - \frac{B}{3})x_3 \geq \frac{B}{6}(6x_2+2x_3).
\]
for $d \geq 5$, both $B$ and $C - B/3$ are positive. Therefore we have $6x_2 + 2x_3 \leq \frac{6A}{B}x_1$.
On the other hand, the $2 \times 2$ counting matrix in Theorem~\ref{thm:sdpbound} is $\begin{pmatrix} 1 & 2x_1 \\ 2x_1 & 2x_1+6x_2+2x_3 \end{pmatrix} \succeq 0$. Therefore
\[
    (2x_1)^2 \leq 2x_1+6x_2+2x_3 \leq \frac{6A+2B}{B}x_1 \Longrightarrow x_1 \leq \frac{3A+B}{2B}.
\]
So
\[
    |X| = 1+2x_1 \leq \frac{3A+2B}{B} = \frac{18d^3-48d^2+24d+4}{3d^3-6d^2-6d+8},
\]
which is less than $6$ when $d \geq 5$.
\end{proof}

\section{Complex $3$-codes attached to non-symmetric schemes with $4$ classes}\label{sec:complex3code}
In \cite{MW}, skew-symmetric association schemes with $4$ classes were studied. 
A commutative association scheme is said to be \defn{skew-symmetric} if there is no symmetric binary relation other than the diagonal relation. 
We focus on $4$-classes case here. 
Let $\mathfrak{X}=(X,\{R_0,R_1,R_1^\top,R_2,R_2^\top\})$ be a skew-symmetric association scheme. 
It is shown in \cite[Theorem~2.1]{MW} that the the first eigenmatrix is described as follows. 
It is known that $\tilde{\mathfrak{X}}=(X,\{R_0,R_1\cup R_1^\top,R_2\cup R_2^\top\})$ is a symmetric association scheme. 
Let 
\begin{align*}
\tilde{P}=
\begin{pmatrix}
1 & k_1 & k_2 \\
1 & r & t \\
1 & s & u 
\end{pmatrix}    
\end{align*}
be the first eigenmatrix of $\tilde{\mathfrak{X}}$ and $m_i$ the multiplicit corresponding to $i$-row. 
Then the first eigenmatrix $P$ of $\mathfrak{X}$ is 
\begin{align*}
P=
\begin{pmatrix}
1 & k_1/2 & k_1/2 & k_2/2 & k_2/2  \\
1 & \rho & \tau & \overline{\tau} & \overline{\rho}  \\
1 & \sigma & \omega & \overline{\omega} & \overline{\sigma}  \\
1 & \overline{\sigma} & \overline{\omega} & \omega & \sigma  \\
1 & \overline{\rho} & \overline{\tau} & \tau & \rho  
\end{pmatrix}    
\end{align*}
and the corresponding multiplicities are $1,m_1/2,m_2/2,m_2/2,m_1/2$. 
The entries $\rho,\omega,\tau$, and $\sigma$ are one of the the three cases: 
\begin{enumerate}[(i)]
    \item $\rho=r/2,\sigma=(s+\sqrt{-b})/2,\tau=(t+\sqrt{-z})/2,\omega=u/2$, where $b=|X|k_1/m_2, z=|X|k_2/m_1$. 
    \item $\rho=(r+\sqrt{-y})/2,\sigma=s/2,\tau=t/2,\omega=(u+\sqrt{-c})/2$, where $y=|X|k_1/m_1,c=|X|k_2/m_2$. 
    \item $\rho=(r+\sqrt{-y})/2,\tau=(t+\sqrt{-z})/2,\sigma=(s+\sqrt{-b})/2,\omega=(u-\sqrt{-c})/2$, where all of $b,c,y,z$ are positive and satisfy the following equations: 
    $$
    m_1y+m_2b=|X|k_1,\quad m_1z+m_2c=|X|k_2,\quad m_1\sqrt{yz}-m_2\sqrt{bc}=0.
    $$
\end{enumerate}
We denote by $P_I,P_{II},P_{III}$ the three first eigenmatrix, and refer them as type I, II, and III, respectively.  
In \cite[Table~2]{MW}, there were $15$ feasible parameters which are not known to exist. 
The following non-existence result for two of the above  is the main result in this subsection. 

\begin{theorem}\label{thm:as4skew}
There exist no skew-symmetric association schemes with $4$ classes with the following parameters:
\begin{enumerate}
    \item $(|X|,k,\lambda,\mu,r^{m_1},s^{m_2},\text{type})=(889,222,35,62,5^{762},-32^{126},\text{II})$
    \item $(|X|,k,\lambda,\mu,r^{m_1},s^{m_2},\text{type})=(945,354,153,120,39^{118},-21^{330},\text{II})$
\end{enumerate}
\end{theorem}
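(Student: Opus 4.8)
The plan is to apply the semidefinite programming bound of Theorem~\ref{thm:sdpbound} to the complex $3$-code $\tilde X$ that arises from one of the primitive idempotents of the putative skew-symmetric association scheme, and derive an infeasibility. First I would use Theorem~\ref{thm:emd}: given a skew-symmetric $4$-class scheme $\mathfrak X$ with the stated parameters, pick a primitive idempotent $E_i$ whose corresponding row of the second eigenmatrix $Q$ has no repeated entries (equivalently $E_i$ has no repeated rows); for type II with the listed $r^{m_1},s^{m_2}$ data the natural choice is the idempotent of rank $m_1/2$ or $m_2/2$ coming from the complex (non-real) eigenvalues $\rho,\tau$ (resp. $\bar\rho,\bar\tau$). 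This yields a distance-invariant complex $3$-code $\tilde X$ in $\Omega(m)$ with $m=m_i$, with known valencies $k_1/2,k_1/2,k_2/2,k_2/2$ (some possibly coinciding in $A(\tilde X)$) and with inner product set $A(\tilde X)=\{q_{ji}/m_i : 1\le j\le 4\}$ computed explicitly from $P_{II}$ and the orthogonality relations $PQ=|X|I$. The parameters $b,c,y,z$ in case (ii) are rational here (one checks $|X|k_1/m_1$ and $|X|k_2/m_2$ are the relevant integers), so $A(\tilde X)$ is an explicit finite set of algebraic numbers closed under conjugation.

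Next I would set up the SDP of Theorem~\ref{thm:sdpbound} with $A = A(\tilde X)$ and compute the three-point distance distribution $x(u,v,t)$ symbolically: because $\tilde X$ comes from an association scheme, the quantities $\sum_{u,v,t}x(u,v,t)Y_{k,l}^{i,j}(u,v,t)$ are governed by the intersection numbers $p_{ij}^k$ and Krein numbers $q_{ij}^k$ of $\mathfrak X$, all of which are determined by the stated parameters. In fact $|X|=1+2x_1$-type counting gives $|\tilde X|=|X|$, so the SDP value is forced to equal $|X|$; the contradiction will come not from the size but from one of the positive-semidefiniteness constraints $\mathbf 1\,Y_{k,l}^{i,j}(1,1,1)+\sum x(u,v,t)Y_{k,l}^{i,j}(u,v,t)\succeq 0$ or one of the scalar constraints $\mathbf 1\,g_{k,l}^d(1)+\sum_u x(u,u,1)g_{k,l}^d(u)\ge 0$ failing for some small $(k,l,i,j)$. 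So the concrete step is: for $m=m_i$ fixed by the parameters, evaluate a handful of low-degree zonal matrices $Y_{k,l}^{i,j}$ (and Jacobi polynomials $g_{k,l}^{d}$) at the points of $A(\tilde X)$, substitute the known distribution, and exhibit a matrix with a negative eigenvalue (or a negative scalar combination). As in the complex $2$-code theorem above, a single well-chosen low-degree polynomial — likely $(Y_{k,l}^{i,j})_{0,0}$ of the form $(t-\bar uv)^{\,\mathrm{deg}}$ times positive factors, or $g_{3,0}^d$, $g_{2,1}^d$, $g_{2,2}^d$ — should already do it.

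For both parameter sets the argument is the same with different numbers, so I would organize the proof by (a) writing down $P_{II}$ and $Q$ explicitly, reading off $A(\tilde X)$ and the valencies; (b) recording the three-point distribution from the intersection numbers; (c) plugging into one violated constraint of Theorem~\ref{thm:sdpbound}. I expect the main obstacle to be step (b): computing the three-point counts $x(u,v,t)$ for a $3$-code from association-scheme data requires the full intersection-number table of the skew-symmetric scheme (not just $|X|,k,\lambda,\mu$), and one must verify these are uniquely determined — using the known first eigenmatrix $P$, the relation $k_i p_{ij}^k = $ (entrywise), and the triple-intersection bookkeeping. A secondary technical point is checking that $A(\tilde X)$ genuinely has the right cardinality (that no two of the four values $q_{ji}/m_i$ collide, or, if they do, accounting for it), since the domain $D\cup D_0$ of the SDP depends on $A$. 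Once the distribution is in hand, the final inequality is a routine finite computation analogous to the one in Section~\ref{sec:2code}.
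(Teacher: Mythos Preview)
Your plan would eventually succeed, but it is far more elaborate than what the paper actually does. The paper's proof never touches the SDP or the three-point distribution at all: it uses only the linear programming constraint of Lemma~\ref{lem:sum} (equivalently, the scalar constraint $g_{k,l}^d(1)+\sum_u x(u,u,1)g_{k,l}^d(u)\ge 0$ in Theorem~\ref{thm:sdpbound}), and for both parameter sets the single polynomial $g_{3,0}^d(x)=\tfrac{d(d+1)(d+2)}{6}x^3$ already gives a negative value. Concretely, from Theorem~\ref{thm:emd} one reads off $d$, the inner product set $A(\tilde X)=\{\alpha_1,\alpha_2,\alpha_3\}$, and the valencies $\tilde k_1,\tilde k_2,\tilde k_3$ (these come straight from the second eigenmatrix $Q$ and the first eigenmatrix $P$; see the appendix); then one simply computes $g_{3,0}^d(1)+\tilde k_1 g_{3,0}^d(\alpha_1)+\tilde k_2 g_{3,0}^d(\alpha_2)+\tilde k_3 g_{3,0}^d(\alpha_3)$ and observes it is negative in each case. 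That is the entire argument.

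So the step you flag as the ``main obstacle'' --- computing the three-point counts $x(u,v,t)$ from the full intersection-number table --- is not an obstacle at all, because it is never needed. Distance-invariance gives $x(u,u,1)$ directly as the valencies, and the two-point data is enough. Your framework is correct and contains the paper's argument as the special case ``try the scalar constraint with $g_{3,0}^d$''; you simply did not anticipate that this first, easiest test would already be decisive, and so you planned machinery (zonal matrices, triple intersection numbers, checks on $|A(\tilde X)|$) that the proof does not require.
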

\begin{proof}
Assume to the contrary that there are such association schemes. 
From the above parameters and by Theorem~\ref{thm:emd}, we have the following $3$-codes $\tilde{X}$ in $\Omega(d)$: 
\begin{enumerate}
    \item  $d=63$, $A(\tilde{X})=\left\{\alpha_1=-\frac{16}{111},\alpha_2=\frac{31}{666}+\frac{1}{18} i \sqrt{\frac{127}{37}},\alpha_3=\frac{31}{666}-\frac{1}{18} i \sqrt{\frac{127}{37}}\right\}$,
    \item  $d=59$, $A(\tilde{X})=\left\{\alpha_1=-\frac{4}{59},\alpha_2=\frac{13}{118}+\frac{3 i \sqrt{35}}{118},\alpha_3=\frac{13}{118}-\frac{3 i \sqrt{35}}{118}\right\}$. 
\end{enumerate} 
By Theorem~\ref{thm:emd}, each $\tilde{X}$ is distance invariant with valencies $\tilde{k}_1,\tilde{k}_2,\tilde{k}_3$ corresponding to $\alpha_1,\alpha_2,\alpha_3$ given as 
\begin{enumerate}
    \item $(\tilde{k}_1,\tilde{k}_2,\tilde{k}_3)=(222,333,333)$, 
    \item $(\tilde{k}_1,\tilde{k}_2,\tilde{k}_3)=(590,177,177)$. 
\end{enumerate}
Recall that 
$g_{3,0}^d(x) = \frac{d(d+1)(d+2)}{6}x^3$.  
Then the values 
$$
\frac{1}{|X|}\sum_{a,b \in \tilde{X}} g^d_{3,0} (a^* b)=g^d_{3,0}(1)+\tilde{k}_1 g^d_{3,0}(\alpha_1)+\tilde{k}_2g^d_{3,0}(\alpha_2)+\tilde{k}_3g^d_{3,0}(\alpha_3)
$$
are $-\frac{941202080}{36963}$ for (1) and  $-\frac{2882250}{59}$ for (2), which contradict to Lemma~\ref{lem:sum}. 
\end{proof}

\section{SDP for $3$-distance set in real unit sphere}\label{sec:real3code}

In this section, we improve the known upper bounds for three distance sets related to Hadamard matrices and give another proof for known results in $Q$-antipodal $Q$-polynomial association schemes by the LP and the SDP method. 

\subsection{Quasi-unbiased Hadamard matrices}\label{sec:qh}
A Hadamard matrix of order $d$ is a $d\times d$ $(1,-1)$-matrix $H$ such that $HH^\top=d I$, and 
a weighing matrix of order $d$ and weight $k$ is a $d\times d$ $(0,1,-1)$-matrix $W$ such that $WW^\top=k I$. 
A weighing matrix of order $d$ and weight $k$ is a Hadamard matrix of order $n$.  
Two Hadamard matrices $H_1,H_2$ of order $d$ are said to be {\em unbiased} if 
$(1/\sqrt{d})H_1 H_2^\top$ is a Hadamard matrix. 
The set of normalized row vectors of mutually unbiased Hadamard matrices $H_1,\ldots,H_f$ whose norm equals to $1$ formrs a set of bi-angular lines set in $\mathbb{R}^d$ with inner products $\pm1/\sqrt{d},0$.

Two Hadamard matrices $H_1,H_2$ of order $d$ are said to be {\em quasi-unbiased for parameters $(d,l,a)$} if 
$(1/\sqrt{a})H_1 H_2^\top$ is a weighing matrix of weight $l$. 
Note that $l=d^2/a$. 
Hadamard matrices $H_1,\ldots,H_f$ of order $d$ are {\em mutually quasi-unbiased for parameters $(d,l,a)$} if any two distinct are quasi-unbiased for the parameters. 
The set of normalized row vectors of $H_1,\ldots,H_f$ whose norm equals to $1$ forms a set of bi-angular lines set in $\mathbb{R}^d$ with inner products $\pm1/\sqrt{l},0$.

Therefore, a set of $f$ quasi-unbiased Hadamard matrices for parameters $(d, l, a)$ gives a spherical three-distance set in $S^{d-1}$ with $|X| = df$ and $A(X) = \{\pm1/\sqrt{l},0\}$. The linear programming/semidefinite programming method (on spherical codes) improves the known upper bounds for $f$, the number of the Hadamard matrices for which they are quasi-unbiased as follows.

\begin{theorem}
Let $d,l$ be positive integers, and $f$ be the maximum number of Hadamard matrices of order $d$ for which the Hadamard matrices are quasi-unbiased for parameters $(d,l,a)$. Then the upper bound $f$ is given in the table:
\begin{table}[H]\centering
  \begin{tabular}{|c||c|c|c|c|} \hline
      $(d,l,a)$  & previous known bound on $f$~\cite{AHS} & Theorem~\ref{thm:Hadamard1} & Theorem~\ref{thm:Hadamard2} & SDP~\cite{Liu}  \\ \hline \hline 
   $(16,4,64)$  & $f\leq 35$ & & $f \leq 15$ & $f \leq 15$ \\ \hline 
   $(24,4,144)$ & $f\leq 85$ & & $f \leq 27$ & $f \leq 25$ \\ \hline
  $(24,9,64)$  & $f\leq 85$ & $f \leq 208$ & & $f \leq 95$ \\ \hline
  $(32,4,256)$ &  $f\leq 155$ & & $f \leq 63$ & $f \leq 47$ \\ \hline
  $(36,9,144)$ & $f\leq 199$ & & $f \leq 80$ & $f \leq 79$ \\ \hline
  $(40,4,400)$ & $f\leq 247$ & & & $f \leq 101$ \\ \hline
  $(40,25,64)$ & $f\leq 28$ & $f\leq 30$ & & $f \leq 30$ \\ \hline
  $(48,4,576)$ & $f\leq 361$ & & & $f \leq 276$ \\ \hline
 $(48,9,256)$ & $f\leq 361$ & & $f \leq 105$ & $f \leq 104$ \\ \hline
 $(48,16,144)$ & $f\leq 361$ & & & $f \leq 316$ \\ \hline
 $(48,36,64)$ & $f\leq 28$ & $f \leq 30$ & & $f \leq 30$ \\ \hline
  \end{tabular}
\end{table}
\end{theorem}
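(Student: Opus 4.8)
The plan is to reduce each entry of the table to an application of either Lemma~\ref{lem:sum} (the LP bound coming from Jacobi polynomials) or Theorem~\ref{thm:sdpbound} (the SDP bound), specialized to the real three-distance set $X \subset S^{d-1}$ with $|X| = df$ and $A(X) = \{1/\sqrt{l},-1/\sqrt{l},0\}$ that arises from a family of $f$ mutually quasi-unbiased Hadamard matrices for parameters $(d,l,a)$. First I would record the combinatorial structure of $X$: the row vectors of each Hadamard matrix (normalized to unit length) form an orthonormal-like cluster of $d$ vectors with pairwise inner product $0$ within a matrix, and any two rows from distinct matrices have inner product $\pm 1/\sqrt{l}$; moreover each cluster is a ``tight'' configuration, so $X$ is distance invariant and the valencies are forced: within a block there are $d-1$ vectors at inner product $0$, and across blocks the number at $+1/\sqrt{l}$ equals the number at $-1/\sqrt{l}$, namely $d(f-1)/2$ each (using that each Hadamard matrix and its negation both appear, or equivalently that $H_1H_2^\top$ and $-H_1H_2^\top=H_1(-H_2)^\top$ are both weighing matrices). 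This pins down the full distance distribution $x(u,u,1)$ and, for the SDP columns, forces enough structure on the three-point distribution $x(u,v,t)$ to feed into Theorem~\ref{thm:sdpbound}.

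Next, for the columns attributed to Theorem~\ref{thm:Hadamard1} and Theorem~\ref{thm:Hadamard2}, I would set up the linear/semidefinite program explicitly with $A = \{1/\sqrt{l},-1/\sqrt{l},0\}$ in Theorem~\ref{thm:sdpbound} (the real analogue, where the Jacobi polynomials $g_{k,l}^d$ degenerate to Gegenbauer polynomials and the zonal matrices $Y_{k,l}^{i,j}$ to the Bachoc--Vallentin matrices). The key step is to choose, for each parameter triple $(d,l,a)$, a small set of polynomial constraints — a few Gegenbauer positivity inequalities $\sum_u x(u,u,1) g_{k}^d(u) \ge -g_k^d(1)$ and one or two positive-semidefiniteness constraints on the matrices $\sum x(u,v,t) Y_{k}^{i}(u,v,t)$ — whose combination yields the stated numerical bound on $f = (1+2x_1+\textrm{(cross valency)})/d$ or directly on $|X|$. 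In several rows (e.g.\ $(40,25,64)$ and $(48,36,64)$, where the previous bound $28$ is already small) I expect that a single well-chosen Gegenbauer polynomial of low degree suffices and the argument is essentially the LP bound of \cite{DGS} applied to this distance set; in the rows with large $l/d$ ratio the SDP constraint genuinely does better than LP, mirroring what \cite{Liu} obtains.

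The main obstacle, and the place where real work is hidden, is the explicit optimization: for each of the eleven triples one must (i) identify which degrees $(k,l,i,j)$ to include so that the resulting finite SDP is both feasible-dual-certifiable and tight enough to give the tabulated integer, and (ii) verify the arithmetic that the optimum of the relaxation, rounded down, equals the claimed bound — this is the analogue of the computation carried out in the proof of the complex $2$-code theorem above, but with three nonzero inner products it is heavier. I would organize this as two lemmas, stated as Theorem~\ref{thm:Hadamard1} and Theorem~\ref{thm:Hadamard2}, each giving a closed-form bound on $f$ as a function of $(d,l)$ obtained from a fixed small family of polynomials (so that the table entries are then just substitutions), and relegate the per-parameter numerical checks to a short verification. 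The secondary subtlety is justifying distance-invariance and the exact cross-valency $d(f-1)/2$ rigorously from the weighing-matrix identity $WW^\top = lI$ together with $W = (1/\sqrt a)H_1H_2^\top$ and $l a = d^2$; this is a direct counting argument but must be done carefully because it is what converts the a priori bilinear SDP on $x(u,v,t)$ into a genuine univariate bound on $|X| = df$.
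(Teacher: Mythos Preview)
Your proposal takes a genuinely different and unnecessarily heavy route, and it contains a factual error that would derail part of the argument.

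The paper's proof is much simpler than what you outline. For the columns labeled Theorem~\ref{thm:Hadamard1} and Theorem~\ref{thm:Hadamard2}, the paper does \emph{not} use the SDP of Theorem~\ref{thm:sdpbound} at all, nor any distance-invariance or valency computation. It applies the classical Delsarte--Goethals--Seidel LP bound (Lemma~\ref{lem:lp}) directly: exhibit a single polynomial $F(x)$ with $F(\alpha)=F(-\alpha)=F(0)=0$ and nonnegative Gegenbauer coefficients, then $|X|\le F(1)/f_0$. Concretely, Theorem~\ref{thm:Hadamard1} uses $F(x)=x^2(x-\alpha)(x+\alpha)$ (degree~4) and Theorem~\ref{thm:Hadamard2} uses $F(x)=x^2(x-\alpha)(x+\alpha)\bigl((d+8)(x^2+\alpha^2)-15\bigr)$ (degree~6), with $\alpha=1/\sqrt{l}$. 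The table entries are then direct substitutions into the resulting closed-form bounds on $|X|$, divided by $d$. The SDP column is simply a citation to the numerical real-sphere SDP of \cite{Liu} with $p_{\mathrm{LP}}=p_{\mathrm{SDP}}=5$; no new argument is given. In particular, the complex Jacobi polynomials and the complex zonal matrices $Y_{k,l}^{i,j}$ of Section~\ref{sec:SDP} play no role whatsoever in this section.

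Your plan to establish distance invariance with cross-valencies $d(f-1)/2$ is both unnecessary and incorrectly justified. The LP bound of Lemma~\ref{lem:lp} requires no valency information at all, so this step is superfluous for the Hadamard1/Hadamard2 columns. Moreover, your argument for the equal split of the $\pm 1/\sqrt{l}$ valencies --- that ``each Hadamard matrix and its negation both appear'' --- is wrong: the code $X$ consists of the normalized rows of $H_1,\dots,H_f$ only, not of $\pm H_i$, and nothing in the definition forces $-H_i$ to belong to the family. The observation that $-H_1H_2^\top$ is also a weighing matrix is true but does not imply $-H_2$ is among the $H_j$. If you wanted to salvage a valency argument you would need to use the zero row-sum property of a weighing matrix of even weight, but again, none of this is needed for the paper's proof.
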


Note that every known upper bound on $f$ is derived from the absolute bound for self-complementary codes or the LP for the binary Hamming schemes. 


The bounds in the third and the fourth column from the table above are obtained from Theorem~\ref{thm:Hadamard1} and Theorem~\ref{thm:Hadamard2}, respectively. The blank entry means that the conditions in the theorems are not satisfied.

The bounds in the last column are obtained from semidefinite programming on spherical three distance sets. The formula is developed in~\cite{BV} and~\cite{Liu}. We apply Theorem 3.1 in~\cite{Liu} with $\{d_1, d_2, d_3\} = \{\pm1/\sqrt{l}, 0\} = A(X)$ and $p_{LP} = p_{SDP} = 5$ here. 

\begin{lemma}[\cite{DGS}, Theorem 4.3]
\label{lem:lp}
Suppoose $X$ is a spherical three-distance set in $S^{d-1}$ with $A(X) = \{\alpha, \beta, \gamma\}$. Suppose $F(x)$ is a polynomial with Gegenbauer expansion
\[
    F(x) = \sum_{k \geq 0} f_kG^d_k(x).
\]
If $f_0 > 0$, $f_k \geq 0$ for all $k$, and $F(\alpha), F(\beta), F(\gamma) \leq 0$, then $|X| \leq \frac{F(1)}{f_0}$.
\end{lemma}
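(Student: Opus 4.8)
The statement to prove is the classical linear programming bound (Delsarte--Goethals--Seidel) for spherical codes, restricted here to a three-distance set: if $F(x) = \sum_{k \geq 0} f_k G^d_k(x)$ has $f_0 > 0$, all $f_k \geq 0$, and $F$ is nonpositive at each of the three inner product values $\alpha, \beta, \gamma$, then $|X| \leq F(1)/f_0$.

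\medskip

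The plan is to evaluate the double sum $S := \sum_{x,y \in X} F(x^\top y)$ in two ways. On the one hand, split off the diagonal terms $x = y$, which contribute $|X| F(1)$, from the off-diagonal terms; since each off-diagonal inner product $x^\top y$ lies in $\{\alpha, \beta, \gamma\}$ and $F$ is nonpositive there, the off-diagonal contribution is $\leq 0$, giving $S \leq |X| F(1)$. On the other hand, substitute the Gegenbauer expansion and interchange the finite sums:
\[
S = \sum_{k \geq 0} f_k \sum_{x,y \in X} G^d_k(x^\top y).
\]
The key input is the positive-definiteness property of Gegenbauer polynomials: for each $k$, the addition formula (the real analogue of Theorem~\ref{thm:addition}, which is exactly what underlies Lemma~\ref{lem:sum} in the complex setting) gives $\sum_{x,y \in X} G^d_k(x^\top y) = \sum_{i} \bigl( \sum_{x \in X} e_i(x) \bigr)^2 \geq 0$ for an orthonormal basis $\{e_i\}$ of the degree-$k$ harmonic space. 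Hence every term with $k \geq 1$ is nonnegative, and the $k = 0$ term equals $f_0 G^d_0(1) |X|^2 = f_0 |X|^2$ since $G^d_0 \equiv 1$ and the zero-degree harmonic sum is $|X|^2$. Therefore $S \geq f_0 |X|^2$.

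\medskip

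Combining the two estimates, $f_0 |X|^2 \leq S \leq |X| F(1)$, and dividing by $f_0 |X| > 0$ yields $|X| \leq F(1)/f_0$, which is the claim. I would also remark that the hypothesis $F(\alpha), F(\beta), F(\gamma) \leq 0$ only needs to hold at those values of $A(X)$ that actually occur, so the three-distance phrasing is just the relevant special case; the argument is identical for any finite inner product set.

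\medskip

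There is essentially no obstacle here: the proof is a two-line double-counting once the positive-definiteness of the $G^d_k$ is granted, and that fact is the real-sphere counterpart of Theorem~\ref{thm:addition} and Lemma~\ref{lem:sum} already recorded in the paper. The only point requiring a word of care is the normalization convention for $G^d_k$ (so that $G^d_0 = 1$ and the $k=0$ coefficient extraction is correct); with the normalization used in~\cite{DGS} this is immediate. Since the result is quoted directly from \cite{DGS}, one may alternatively simply cite it, but the sketch above is the self-contained argument.
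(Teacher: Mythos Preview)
Your proof is correct and is precisely the classical Delsarte--Goethals--Seidel double-counting argument. The paper itself does not give a proof of this lemma but simply cites \cite[Theorem~4.3]{DGS}, so there is no independent approach to compare against; your sketch reproduces exactly the original argument.
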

The notation $G^d_k( \cdot)$ is the Gegenbauer polynomial for dimension $d$ and degree $k$. 

\begin{theorem}\label{thm:Hadamard1}
Suppose $X$ is a spherical three-distance set in $S^{d-1}$ with $A(X) = \{\pm\alpha, 0\}$. If $(d+4)\al^2 \leq 6$ and $(d+2)\al^2 < 3$, then
\[
    |X| \leq d(d+2)\cdot\frac{1-\al^2}{3-(d+2)\al^2}.
\]
\end{theorem}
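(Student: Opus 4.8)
The plan is to apply Lemma~\ref{lem:lp} with a carefully chosen polynomial $F$ of degree $3$. Since $A(X) = \{\pm\alpha, 0\}$ is symmetric, and the cubic term $G^d_3$ is odd, a natural first guess is an even polynomial; but that would only have degree $2$ and cannot vanish at three points. Instead I would look for $F$ of degree $3$ whose Gegenbauer expansion has all nonnegative coefficients and which is nonpositive at $\pm\alpha$ and at $0$. The key observation is that $F(x)$ should be divisible by $x$ to handle the root at $0$, but divisibility is too strong; rather, one wants $F(0) \leq 0$ together with $F(\pm\alpha) \leq 0$. A clean way to engineer this is to take $F(x) = (x^2 - \alpha^2)(x + c)$ for a constant $c$ to be chosen, or more symmetrically $F(x) = x(x^2-\alpha^2) + (\text{lower order even part})$. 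Given the target bound $|X| \leq d(d+2)(1-\alpha^2)/(3-(d+2)\alpha^2)$, I expect the optimal choice to be essentially $F(x) = (x^2-\alpha^2)\bigl(x + \tfrac{?}{}\bigr)$ adjusted so that the coefficient of $G^d_1$ vanishes or is controlled; the two hypotheses $(d+4)\alpha^2 \leq 6$ and $(d+2)\alpha^2 < 3$ should be exactly the conditions that make $f_2 \geq 0$ (resp. $f_3 \geq 0$) and $f_0 > 0$.

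First I would write down the Gegenbauer expansion of the monomials $1, x, x^2, x^3$ in dimension $d$ using the three-term recurrence
\[
x G^d_k(x) = \frac{k}{2k+d-2}G^d_{k-1}(x) + \frac{k+d-2}{2k+d-2}G^d_{k+1}(x),
\]
with $G^d_0 = 1$, $G^d_1 = x$. This gives $x^2 = \frac{1}{d}G^d_2 + \frac{1}{d}G^d_0$ (up to the standard normalization) and a similar formula for $x^3$ in terms of $G^d_1$ and $G^d_3$. Second, I would form the candidate $F(x) = x^3 + p\,x^2 + q\,x + r$ (with $p,q,r$ real), impose $F(\alpha) = F(-\alpha) = 0$ — which forces $p = r/\alpha^2$ wait, more precisely $F(\alpha)+F(-\alpha)=0$ gives $p\alpha^2 + r = 0$ and $F(\alpha)-F(-\alpha)=0$ gives $\alpha^2 + q = 0$, so $q = -\alpha^2$ and $r = -p\alpha^2$, leaving one free parameter $p$. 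Then $F(0) = r = -p\alpha^2 \leq 0$ forces $p \geq 0$. Third, I would compute $f_0, f_1, f_2, f_3$ as linear functions of $p$ from the expansion, and choose $p$ (likely $p=0$, giving $F(x) = x^3 - \alpha^2 x = x(x^2-\alpha^2)$, or possibly the minimal positive $p$ making $f_1 = 0$) so that all $f_k \geq 0$; finally evaluate $F(1)/f_0 = (1-\alpha^2)/f_0$ and check it equals the claimed bound.

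The main obstacle I anticipate is getting the Gegenbauer normalization exactly right so that $F(1)/f_0$ matches $d(d+2)(1-\alpha^2)/(3-(d+2)\alpha^2)$ on the nose, and verifying that the two stated inequalities $(d+4)\alpha^2 \leq 6$ and $(d+2)\alpha^2 < 3$ are precisely the sign conditions on the $f_k$ rather than merely sufficient ones. If $F(x) = x(x^2-\alpha^2)$ works, then $f_1$ will be proportional to $\bigl(\tfrac{3}{2k+d-2}\text{-type factor}\bigr) - \alpha^2$ and $f_3$ to a positive constant, and I would expect $f_1 \geq 0 \iff (d+2)\alpha^2 \leq 3$ and $f_0 > 0$ or $f_2 \geq 0 \iff (d+4)\alpha^2 \leq 6$; the strictness in one and not the other is consistent with one being the denominator condition ($f_0 \neq 0$ needs $3 - (d+2)\alpha^2 > 0$, hence the strict inequality) and the other a genuine nonnegativity condition. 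Once the coefficients are pinned down, the rest is the routine arithmetic of plugging into Lemma~\ref{lem:lp}, so I would not belabor it.
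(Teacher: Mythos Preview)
Your approach has a concrete gap: a degree-$3$ polynomial cannot produce the stated bound. If you take $F(x)=x(x^2-\alpha^2)$, then $F$ is odd, so its Gegenbauer expansion has $f_0=0$ and Lemma~\ref{lem:lp} does not apply. If you add the even part as you suggest, writing $F(x)=(x^2-\alpha^2)(x+p)$ with $p\ge 0$, a short computation gives
\[
f_0=\frac{p(1-d\alpha^2)}{d},\qquad f_1=\frac{3-(d+2)\alpha^2}{d+2},\qquad f_2=\frac{p(d-1)}{d},\qquad f_3=\frac{d-1}{d+2},
\]
so $f_0>0$ forces the stronger hypothesis $d\alpha^2<1$, and the ratio $F(1)/f_0=\dfrac{d(1-\alpha^2)(1+p)}{p(1-d\alpha^2)}$ never equals the claimed $d(d+2)(1-\alpha^2)/(3-(d+2)\alpha^2)$; in fact optimizing over $p$ just recovers the degree-$2$ bound. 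Your remark that ``an even polynomial would only have degree~$2$ and cannot vanish at three points'' is the error: since $A(X)=\{0,\pm\alpha\}$ is symmetric about the origin, the minimal even annihilator is $x^2(x^2-\alpha^2)$, of degree~$4$.

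The paper uses exactly $F(x)=x^2(x^2-\alpha^2)$. Expanding gives $f_1=f_3=0$, $f_4=\dfrac{(d-1)(d+1)}{(d+2)(d+4)}>0$, $f_2=\dfrac{(d-1)\bigl(6-(d+4)\alpha^2\bigr)}{d(d+4)}$, and $f_0=\dfrac{3-(d+2)\alpha^2}{d(d+2)}$. Now the two hypotheses are precisely $f_2\ge 0$ and $f_0>0$, and $F(1)/f_0$ is the asserted bound. The appearance of $d+4$ in the hypothesis is a strong hint that $G_4^d$ is involved, which should steer you toward degree~$4$.
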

\begin{proof}
Let $F(x) = x^2(x+\al)(x-\al)$. The Gegenbauer polynomial expansion of $F(x)$ is $F(x) = \sum_{k=0}^4 f_kG^d_k(x)$, where
\begin{align*}
    f_4 &= \frac{(d-1)(d+1)}{(d+2)(d+4)}, \\
    f_3 &= 0, \\
    f_2 &= \frac{d-1}{d(d+4)}\Big(6-(d+4)\al^2\Big), \\
    f_1 &= 0, \\
    f_0 &= \frac{3-(d+2)\al^2}{d(d+2)}.
\end{align*}
The proof is followed by Lemma~\ref{lem:lp}.
\end{proof}

\begin{theorem}\label{thm:Hadamard2}
Suppose $X$ is a spherical three-distance set in $S^{d-1}$ with $A(X) = \{\pm\alpha, 0\}$. If $(d+6)(d+8)\al^4 - 15(d+6)\al^2 + 45 \leq 0$ and $(d+4)(d+8)\al^4 - 15(d+4)\al^2 + 30 < 0$, then
\[
    |X| \leq d(d+4) \cdot \frac{(1-\al^2)(15-(d+8)(1+\al^2))}{(d+4)(d+8)\alpha^4 - 15(d+4)\alpha^2 + 30}.
\]
\end{theorem}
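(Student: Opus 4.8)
The plan is to follow the template of Theorem~\ref{thm:Hadamard1} exactly, but using a degree-$6$ test polynomial instead of a degree-$4$ one. First I would set $F(x) = x^2(x+\al)^2(x-\al)^2 = x^2(x^2-\al^2)^2$, which is manifestly nonnegative on $[-1,1]$ and vanishes precisely at the three inner products $\pm\al$ and $0$ (with $F(\pm\al)=F(0)=0\le 0$ as Lemma~\ref{lem:lp} requires). Since $F$ is even of degree $6$, its Gegenbauer expansion $F(x)=\sum_{k=0}^{6} f_k G_k^d(x)$ has $f_1=f_3=f_5=0$ automatically, so only $f_0,f_2,f_4,f_6$ are nonzero. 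The leading coefficient $f_6$ is a fixed positive rational in $d$ (coming from normalizing $x^6$ against $G_6^d$), hence automatically nonnegative; the two stated hypotheses are exactly the conditions $f_2\ge 0$ and $f_0>0$ written out, after computing these coefficients. Then Lemma~\ref{lem:lp} gives $|X|\le F(1)/f_0$, and $F(1) = (1-\al^2)^2$, which upon dividing by the computed $f_0$ should rearrange into the displayed bound.

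The key computational step is expanding $x^2(x^2-\al^2)^2 = x^6 - 2\al^2 x^4 + \al^4 x^2$ in the Gegenbauer basis $G_k^d$. I would do this by first recording the expansions of the monomials $x^2$, $x^4$, $x^6$ in terms of $G_0^d,G_2^d,G_4^d,G_6^d$ (these follow from the three-term recurrence for Gegenbauer polynomials, or equivalently from inverting the known expansions of $G_k^d$ in monomials), and then taking the linear combination with coefficients $1, -2\al^2, \al^4$. Collecting the coefficient of $G_0^d$ yields $f_0$; one expects $f_0 = \dfrac{(d+4)(d+8)\al^4 - 15(d+4)\al^2 + 30}{d(d+2)(d+4)}$ (or a constant multiple thereof — the denominator $d(d+4)$ in the theorem and the factor $(d+2)$ suggest this normalization), and collecting the coefficient of $G_2^d$ yields $f_2$ proportional to $(d+6)(d+8)\al^4 - 15(d+6)\al^2 + 45$, matching the first hypothesis. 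The second hypothesis $(d+4)(d+8)\al^4 - 15(d+4)\al^2 + 30 < 0$ is then precisely $f_0 > 0$ with the sign reversed, which is consistent because the quoted upper bound has this quantity in the denominator and must be positive for the bound to be meaningful with the numerator $(1-\al^2)(15-(d+8)(1+\al^2))$; one should double-check that $15 - (d+8)(1+\al^2)$ carries the right sign so that the final fraction is positive, which will follow from the hypotheses.

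The main obstacle is purely bookkeeping: getting the Gegenbauer-to-monomial conversion constants right for degree $6$, since the normalization of $G_k^d$ used in \cite{DGS} (namely $G_k^d(1)=1$, or possibly an unnormalized convention) affects every coefficient, and a single arithmetic slip will produce hypotheses that do not match the stated ones. I would pin down the convention by cross-checking against Theorem~\ref{thm:Hadamard1}: the degree-$4$ computation there gives $f_0 = \dfrac{3-(d+2)\al^2}{d(d+2)}$ and $f_2 = \dfrac{d-1}{d(d+4)}\big(6-(d+4)\al^2\big)$, and the pattern $3, 6, \ldots$ versus $15, 15, \ldots$ and the denominators strongly constrain the degree-$6$ analogue, so I would compute symbolically and verify the two hypotheses come out verbatim. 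Once the coefficients are confirmed, the rest — checking $f_4\ge 0$ and $f_6 > 0$ hold unconditionally (or are implied), and simplifying $F(1)/f_0$ to the displayed form — is routine algebra, and the proof concludes by citing Lemma~\ref{lem:lp}.
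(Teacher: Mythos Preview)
Your overall template is right, but the specific test polynomial you chose is wrong, and this is not a bookkeeping issue: with $F(x)=x^2(x^2-\al^2)^2$ the Gegenbauer coefficients simply do not match the hypotheses or the conclusion of the theorem.

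Concretely, using $a_{20}=\tfrac{1}{d}$, $a_{40}=\tfrac{3}{d(d+2)}$, $a_{60}=\tfrac{15}{d(d+2)(d+4)}$ (and analogously for $a_{22},a_{42},a_{62}$, $a_{44},a_{64}$) one finds for your $F$
\[
f_0=\frac{(d+2)(d+4)\al^4-6(d+4)\al^2+15}{d(d+2)(d+4)},\qquad
f_2=\frac{(d-1)\bigl((d+4)(d+6)\al^4-12(d+6)\al^2+45\bigr)}{d(d+4)(d+6)},
\]
\[
f_4=\frac{(d-1)(d+1)\bigl(15-2(d+8)\al^2\bigr)}{(d+2)(d+4)(d+8)}.
\]
None of $f_0,f_2$ is a scalar multiple of the quantities $(d+4)(d+8)\al^4-15(d+4)\al^2+30$ or $(d+6)(d+8)\al^4-15(d+6)\al^2+45$ appearing in the statement, so the two stated hypotheses are \emph{not} the conditions $f_0>0$ and $f_2\ge 0$ for your polynomial. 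Moreover $f_4\ge 0$ is an extra constraint, namely $\al^2\le \tfrac{15}{2(d+8)}$, which is \emph{not} implied by the hypotheses: for instance $d=100$, $\al^2=\tfrac{1}{12}$ satisfies both stated inequalities but gives $f_4<0$. Finally your $F(1)=(1-\al^2)^2$, whereas the bound in the theorem has numerator $(1-\al^2)\bigl(15-(d+8)(1+\al^2)\bigr)$; so even if Lemma~\ref{lem:lp} applied, you would obtain a different (and in general weaker) inequality, not the one claimed.

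The paper's actual choice is
\[
F(x)=x^2(x+\al)(x-\al)\bigl((d+8)(x^2+\al^2)-15\bigr),
\]
i.e.\ the degree-$4$ annihilator from Theorem~\ref{thm:Hadamard1} multiplied not by another copy of $(x^2-\al^2)$ but by the specific quadratic $(d+8)(x^2+\al^2)-15$. This factor is chosen precisely so that $f_4=0$ (equivalently, so that the $x^4$-coefficient of $F$ equals $-15$, which kills the $G_4^d$ contribution from $x^6$); then $f_3=f_4=f_5=0$, $f_6>0$ automatically, and the two stated hypotheses are exactly $f_2\ge 0$ and $f_0>0$. With this $F$ one has $F(1)=(1-\al^2)\bigl((d+8)(1+\al^2)-15\bigr)$ and $f_0=-\tfrac{1}{d(d+4)}\bigl((d+4)(d+8)\al^4-15(d+4)\al^2+30\bigr)$, and $F(1)/f_0$ is the displayed bound. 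The missing idea in your proposal is this tuning of the extra factor to eliminate the $f_4$ condition rather than squaring the annihilator.
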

\begin{proof}
Let $F(x) = x^2(x+\al)(x-\al)\Big((d+8)(x^2+\al^2)-15\Big)$. The Gegenbauer polynomial expansion of $F(x)$ is $F(x) = \sum_{k=0}^6 f_kG^d_k(x)$, where
\begin{align*}
    f_6 &= \frac{(d-1)(d+1)(d+3)}{(d+4)(d+6)}, \\
    f_5 &= f_4 = f_3 = 0, \\
    f_2 &= -\frac{d-1}{d(d+6)}\Big((d+6)(d+8)\al^4 - 15(d+6)\al^2 + 45\Big), \\
    f_1 &= 0, \\
    f_0 &= -\frac{1}{d(d+4)}\Big((d+4)(d+8)\al^4 - 15(d+4)\al^2 + 30\Big).
\end{align*}
The proof is followed by Lemma~\ref{lem:lp}.
\end{proof}

\subsection{$Q$-antipodal $Q$-polynomial schemes with $3$ classes}\label{sec:q}

In $Q$-antipodal $Q$-polynomial schemes with $3$ classes, the number $w$ of the equivalence classes on the vertex set in Theorem~\ref{thm:imas} is of particular interest. 
In \cite[Page 414]{MMW}, it was shown that $w\leq \frac{m_1+2}{2}$. 
The LP method provides upper bounds for $3$-codes obtained from $Q$-antipodal $Q$-polynomial schemes with $3$ classes and thus those for $w$. The resulting bounds are the same of or slightly more than  \cite[Page 414]{MMW} as shown in the following table. 

The parameters of $Q$-antipodal $Q$-polynomial schemes with $3$ classes can be determined from the number $w$ and the parameters of a symmetric $(v,k,\lambda)$-design attached to the fibers $X_1\cup X_2$. The second eigenamatrix $Q$ is 
$$
Q=\begin{pmatrix}
1 & v-1 & (w-1)(v-1) & w-1 \\
1 & \sqrt{\frac{(v-1)(v-k)}{k}} & -\sqrt{\frac{(v-1)(v-k)}{k}} & -1 \\
1 & -1 & -w+1 & w-1 \\
1 & -\sqrt{\frac{(v-1)k}{v-k}} & \sqrt{\frac{(v-1)k}{v-k}} & -1 
\end{pmatrix}.
$$
In the following table, 
$$
(v,k,\lambda)\in\{(36,15,6),(45,33,24),(64,36,20),(96,76,60),(100,45,20),(144,78,42),(153,57,21)\}. 
$$

\begin{table}[H]\centering
  \begin{tabular}{|c|c|c||c|c|c|} \hline
    $|X|$ & $m_1$ & $A(X)$ & known bound on $w$ & Theorem~\ref{thm:Qscheme1} & Theorem~\ref{thm:Qscheme2} \\ \hline \hline
$36w$ & 35 & $\{1/5,-1/35,-1/7\}$ & $w\leq 18$ & & $w \leq 20$ \\ \hline
$45w$ & 44 & $\{1/11,-1/44,-1/4\}$ & $w\leq 7$ & $w\leq 7$ & \\ \hline
$64w$ & 63 & $\{1/9,-1/63,-1/7\}$ & $w\leq 32$ & $w\leq 32$ & \\ \hline
$96w$ & 95 & $\{1/19,-1/95,-1/5\}$ & $w\leq 7$ & $w\leq 7$ & \\ \hline
$100w$ & 99 & $\{1/9,-1/99,-1/11\}$ & $w\leq 50$ & & $w \leq 52$ \\ \hline
$144w$ & 143 & $\{1/13,-1/143,-1/11\}$ & $w\leq 72$ & $w\leq 72$ & \\ \hline
$153w$ & 152 & $\{2/19,-1/152,-1/16\}$ & $w\leq 77
$ & & $w \leq 99$ \\ \hline
  \end{tabular}
\end{table}

\begin{theorem}[\cite{DGS}, Example 4.7]\label{thm:Qscheme1}
Suppose $X$ is a spherical three-distance set in $S^{d-1}$ with $A(X) = \{\al, \be, \gamma\}$. If
\begin{align*}
    \al+\be+\gamma &\leq 0, \\
    \al\be+\be\gamma+\gamma\al &\geq \frac{-3}{d+2}, \\
    \al+\be+\gamma + d\al\be\gamma &< 0,
\end{align*}
then
\[
    |X| \leq \frac{d(1-\al)(1-\be)(1-\gamma)}{-(\al+\be+\gamma+d\al\be\gamma)}.
\]
\end{theorem}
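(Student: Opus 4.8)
The plan is to deduce this from the linear programming bound of Lemma~\ref{lem:lp}, exactly in the spirit of the proofs of Theorem~\ref{thm:Hadamard1} and Theorem~\ref{thm:Hadamard2}, using the annihilator polynomial
\[
F(x) = (x-\al)(x-\be)(x-\gamma).
\]
Since $F$ vanishes at every element of $A(X) = \{\al,\be,\gamma\}$, the conditions $F(\al), F(\be), F(\gamma) \le 0$ hold automatically (with equality), and $F(1) = (1-\al)(1-\be)(1-\gamma)$. What remains is to compute the Gegenbauer expansion $F(x) = \sum_{k=0}^3 f_k G^d_k(x)$ and to check, from the three hypotheses, that $f_0 > 0$ and $f_1,f_2,f_3 \ge 0$; Lemma~\ref{lem:lp} then yields $|X| \le F(1)/f_0$.

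First I would record the normalized Gegenbauer polynomials for $S^{d-1}$ (with $G^d_k(1)=1$): $G^d_0(x)=1$, $G^d_1(x)=x$, $G^d_2(x)=\frac{dx^2-1}{d-1}$, and $G^d_3(x)=\frac{(d+2)x^3-3x}{d-1}$, obtained from the three-term recurrence. Inverting these gives $x^2 = \tfrac1d G^d_0(x) + \tfrac{d-1}{d}G^d_2(x)$ and $x^3 = \tfrac{3}{d+2}G^d_1(x)+\tfrac{d-1}{d+2}G^d_3(x)$. Writing $F(x)=x^3 - px^2 + qx - r$ with $p=\al+\be+\gamma$, $q=\al\be+\be\gamma+\gamma\al$, $r=\al\be\gamma$ and substituting, the coefficients come out as
\[
f_0 = -\frac{p+dr}{d},\qquad f_1 = \frac{3}{d+2}+q,\qquad f_2 = -\frac{(d-1)p}{d},\qquad f_3 = \frac{d-1}{d+2}.
\]
Here $f_3>0$ always; the hypothesis $\al+\be+\gamma\le 0$ gives $f_2\ge 0$; the hypothesis $\al\be+\be\gamma+\gamma\al\ge\frac{-3}{d+2}$ gives $f_1\ge 0$; and the hypothesis $\al+\be+\gamma+d\al\be\gamma<0$ gives $f_0>0$. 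Applying Lemma~\ref{lem:lp} and simplifying $F(1)/f_0 = d(1-\al)(1-\be)(1-\gamma)\big/\!\big(-(p+dr)\big)$ produces exactly the claimed bound.

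The argument is completely routine and I do not expect any genuine obstacle; the only place demanding a little care is fixing the normalization of $G^d_k$ consistently with Lemma~\ref{lem:lp} (so that $G^d_k(1)=1$ and the leading terms are as above) and carrying out the short inversion-and-substitution bookkeeping so that the four coefficients $f_0,\dots,f_3$ are obtained correctly. This is the classical bound of \cite{DGS}, Example~4.7; I include the computation only for completeness, since it runs parallel to the two preceding theorems.
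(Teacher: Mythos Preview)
Your proposal is correct and is exactly the argument of \cite{DGS}, Example~4.7, which the paper merely cites without reproducing a proof; it also matches the pattern the paper uses for the neighboring Theorems~\ref{thm:Hadamard1}, \ref{thm:Hadamard2}, and \ref{thm:Qscheme2}. Your Gegenbauer coefficients $f_0,\dots,f_3$ and the identification of each hypothesis with the corresponding sign condition are all right, so nothing needs to be changed.
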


\begin{theorem}\label{thm:Qscheme2}
Let $\al \in (-1, 0)$, $\be \in (0, 1)$ and $\al+\be > 0$. Suppose $X$ is a spherical three-distance set in $S^{d-1}$ with $A(X) = \{\al, \be, \al\be\}$ and further $d = -1/\al\be$. If
\begin{align*}
    (\al+\be+1)(1-2\al\be) &\leq 3, \\
    (\al^2+\al\be+\be^2+\al+\be-1)(-8\al^3\be^3+6\al^2\be^2+11\al\be) &\geq (4\al^3\be^3+3\al^2\be^2+5\al\be) + 9\al\be(\al+\be) + 3(\al^2+\be^2), \\
    (\al^2+2\al\be+\be^2+\al+\be+1)(4\al^2\be^2-8\al\be) &< -6\al\be(\al+\be+3) - 3(\al^2+\be^2),
\end{align*}
then
\[
    |X| \leq \frac{2(1-\al^2)(1-\be^2)(1-\al\be)(1-2\al\be)}{\al\be\Big((\al^2+2\al\be+\be^2+\al+\be+1)(4\al^2\be^2-8\al\be) + 6\al\be(\al+\be+3) + 3(\al^2+\be^2)\Big)}.
\]
\end{theorem}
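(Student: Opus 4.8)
The plan is to apply Lemma~\ref{lem:lp} with a carefully chosen polynomial $F$ of degree $6$, mirroring the structure of Theorem~\ref{thm:Hadamard2}. Since the three inner products are $\al$, $\be$, and $\al\be$, and we want $F$ to vanish (or be nonpositive) at all three, the natural choice is $F(x) = (x-\al)(x-\be)(x-\al\be)\cdot Q(x)$ where $Q(x)$ is a quadratic factor chosen so that the odd-index Gegenbauer coefficients $f_1, f_3, f_5$ vanish and the remaining coefficients $f_0, f_2, f_4, f_6$ have the right signs. Given the three displayed hypotheses correspond exactly to ``$f_6 \geq 0$'' (automatic, the leading coefficient), ``$f_2 \leq 0$'' (second inequality, after clearing denominators and using $d = -1/\al\be$), and ``$f_0 > 0$'' (third inequality), while the first inequality $(\al+\be+1)(1-2\al\be)\le 3$ should govern $f_4 \geq 0$. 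So the concrete choice is presumably $Q(x) = c_2 x^2 + c_1 x + c_0$ with coefficients adjusted (in terms of $\al,\be$) to kill $f_5, f_3, f_1$; one expects $Q$ to be something like $(d+8)(x^2 + \text{const}) - \text{const}$ by analogy with Theorem~\ref{thm:Hadamard2}, but with the constants now depending on both $\al$ and $\be$ rather than a single $\pm\al$.

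First I would write $F(x) = (x-\al)(x-\be)(x-\al\be)(a x^2 + bx + c)$, expand into monomials, and convert to the Gegenbauer basis $\{G_k^d\}_{k=0}^6$ using the known expansion of $x^n$ in Gegenbauer polynomials for the relevant small $n$. This produces $f_1, f_3, f_5$ as explicit linear expressions in $a, b, c$ (with coefficients polynomial in $\al, \be, d$). Setting these three to zero gives a homogeneous linear system; generically it has a one-dimensional solution space, and I would pick a representative $(a,b,c)$, substituting $d = -1/(\al\be)$ to simplify. Then I would compute $f_6, f_4, f_2, f_0$ for that representative and verify that: $f_6 > 0$ always (it is a positive multiple of the product of $(d-1)(d+1)(d+3)$ type factors); $f_4 \geq 0$ is equivalent to the first hypothesis; $f_2 \leq 0$ is equivalent to the second; and $f_0 > 0$ is equivalent to the third. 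Finally, Lemma~\ref{lem:lp} gives $|X| \leq F(1)/f_0$, and I would check that $F(1)/f_0$ simplifies to the stated bound $\frac{2(1-\al^2)(1-\be^2)(1-\al\be)(1-2\al\be)}{\al\be(\cdots)}$, using $F(1) = (1-\al)(1-\be)(1-\al\be)(a+b+c)$.

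The main obstacle is the bookkeeping: solving the $3\times 3$ linear system for $(a,b,c)$ and then re-expanding to get clean closed forms for $f_0, f_2, f_4$ is a substantial symbolic computation, and matching them term-by-term with the somewhat baroque inequalities in the statement (which mix $\al^2+\be^2$, $\al\be$, $\al+\be$, and their products up to degree $6$ in $\al\be$) requires care. One subtlety is that the factor $\al\be < 0$ appears in denominators, so sign-tracking when clearing it is delicate — this is presumably why the hypotheses are phrased with the specific inequality directions they have, and why the conditions $\al \in (-1,0)$, $\be \in (0,1)$, $\al+\be > 0$ are imposed (they pin down signs of $\al\be$, $1-\al^2$, $1-\be^2$, $1-\al\be$, and $1-2\al\be$, all of which must come out so that the bound is a positive number and the Gegenbauer coefficients have consistent signs). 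I would double-check the $d = -1/(\al\be)$ substitution is used consistently, since that is what collapses the degree-$6$ polynomial's coefficients to expressions purely in $\al, \be$; without it the $f_k$ would still involve $d$ and the stated bound would not match.
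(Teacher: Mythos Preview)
Your overall strategy (apply Lemma~\ref{lem:lp} with $F(x)=(x-\al)(x-\be)(x-\al\be)\,Q(x)$ and match the three hypotheses to sign conditions on Gegenbauer coefficients) is correct, but the paper's construction is simpler than what you propose, and your version has a structural flaw.

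The paper takes $Q$ to be \emph{linear}, not quadratic:
\[
F(x)=(x-\al)(x-\be)(x-\al\be)\Bigl(\bigl((2\al\be-1)(\al+\be+1)+3\bigr)x+\bigl(2\al^2\be^2+2\al\be+3\al+3\be\bigr)\Bigr),
\]
so $F$ has degree $4$, not $6$. With $d=-1/(\al\be)$ one computes $f_1=0$, while $f_3=(1+\al)(1+\be)(1+\al\be)(\al+\be)$ is \emph{not} forced to vanish: the standing hypotheses $\al\in(-1,0)$, $\be\in(0,1)$, $\al+\be>0$ make every factor positive, so $f_3>0$ automatically. This is exactly why the condition $\al+\be>0$ appears in the statement, a role your outline does not account for. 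The three displayed inequalities then correspond, as you guessed, to $f_4\ge 0$, $f_2\ge 0$, and $f_0>0$ (note: $\ge 0$, not $\le 0$; your sign on $f_2$ is flipped). The bound $F(1)/f_0$ simplifies to the stated expression because $F(1)=2(1-\al^2)(1-\be^2)(1-\al^2\be^2)$ and the factor $(1+\al\be)$ cancels against the prefactor in $f_0$.

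Your degree-$6$ plan has two concrete problems. First, $(x-\al)(x-\be)(x-\al\be)$ times a quadratic is degree $5$, not $6$; killing $f_5$ would then force the leading coefficient of $Q$ to zero and collapse you back to the linear case anyway. Second, even if you used a cubic $Q$ to reach degree $6$, imposing $f_1=f_3=f_5=0$ is three homogeneous linear conditions on three coefficients, which generically has only the trivial solution; you would need to argue that $d=-1/(\al\be)$ forces a rank drop, and there is no reason to expect that. The paper sidesteps all of this by using only one vanishing condition ($f_1=0$) on the two coefficients of a linear $Q$, leaving $f_3$ positive by hypothesis rather than by construction.
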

\begin{proof}
Let $d = -1/\al\be$ and 
\[
    F(x) = (x-\al)(x-\be)(x-\al\be)\Big(((2\al\be-1)(\al+\be+1)+3)x + (2\al^2\be^2+2\al\be+3\al+3\be)\Big).
\]
The Gegenbauer polynomial expansion of $F(x)$ is $F(x) = \sum_{k=0}^4 f_kG^d_k(x)$, where
\begin{align*}
    f_4 &= \frac{(1+\al\be)(1-\al\be)((2\al\be-1)(\al+\be+1)+3)}{(1-2\al\be)(1-4\al\be)}, \\
    f_3 &= (1+\al)(1+\be)(1+\al\be)(\al+\be), \\
    f_2 &= \frac{1+\al\be}{1-4\al\be} \cdot \Big((\al^2+\al\be+\be^2+\al+\be-1)(-8\al^3\be^3+6\al^2\be^2+11\al\be) \\
    &- (4\al^3\be^3+3\al^2\be^2+5\al\be) - 9\al\be(\al+\be) - 3(\al^2+\be^2)\Big), \\
    f_1 &= 0, \\
    f_0 &= \frac{\al\be(1+\al\be)}{1-2\al\be} \cdot \Big((\al^2+2\al\be+\be^2+\al+\be+1)(4\al^2\be^2-8\al\be) + 6\al\be(\al+\be+3) + 3(\al^2+\be^2)\Big).
\end{align*}
The proof is followed by Lemma~\ref{lem:lp}.
\end{proof}

\section*{Acknowledgements}
The authors would like to thank Aidan Roy for useful discussions.


\appendix
\def\thesection{\Alph{section}}

\section{Eigenmatrices of association schemes} \label{appendix}
We list the eigenmatrices of the association schemes in Theorem~\ref{thm:as4skew} and in Section~\ref{sec:q}. 
\begin{itemize}
    \item 
Theorem~\ref{thm:as4skew}(1):
\begin{align*}
P&=\left(
\begin{array}{cccc}
 1 & 222 & 333 & 333 \\
 1 & 5 & -3 & -3 \\
 1 & -32 & \frac{1}{2} \left(31+i \sqrt{4699}\right) & \frac{1}{2} \left(31-i \sqrt{4699}\right) \\
 1 & -32 & \frac{1}{2} \left(31-i \sqrt{4699}\right) & \frac{1}{2} \left(31+i \sqrt{4699}\right) \\
\end{array}
\right),\\
Q&=\left(
\begin{array}{cccc}
 1 & 762 & 63 & 63 \\
 1 & \frac{635}{37} & -\frac{336}{37} & -\frac{336}{37} \\
 1 & -\frac{254}{37} & \frac{7}{74} \left(31-i \sqrt{4699}\right) & \frac{7}{74} \left(i \sqrt{4699}+31\right) \\
 1 & -\frac{254}{37} & \frac{7}{74} \left(i \sqrt{4699}+31\right) & \frac{7}{74} \left(31-i \sqrt{4699}\right) \\
\end{array}
\right).
\end{align*}

\item Theorem~\ref{thm:as4skew}(2):
\begin{align*}
P&=\left(
\begin{array}{cccc}
 1 & 590 & 177 & 177 \\
 1 & 5 & -3 & -3 \\
 1 & -40 & \frac{3}{2} \left(3 i \sqrt{35}+13\right) & \frac{3}{2} \left(13-3 i \sqrt{35}\right) \\
 1 & -40 & \frac{3}{2} \left(13-3 i \sqrt{35}\right) & \frac{3}{2} \left(3 i \sqrt{35}+13\right) \\
\end{array}
\right),\\
Q&=\left(
\begin{array}{cccc}
 1 & 826 & 59 & 59 \\
 1 & 7 & -4 & -4 \\
 1 & -14 & \frac{1}{2} \left(13-3 i \sqrt{35}\right) & \frac{1}{2} \left(3 i \sqrt{35}+13\right) \\
 1 & -14 & \frac{1}{2} \left(3 i \sqrt{35}+13\right) & \frac{1}{2} \left(13-3 i \sqrt{35}\right) \\
\end{array}
\right). 
\end{align*}

\item Section~\ref{sec:q} for $(v,k,\lambda)=(36,15,6)$: 
\begin{align*}
P&=\left(
\begin{array}{cccc}
 1 & 15 (w-1) & 35 & 21 (w-1) \\
 1 & 3 (w-1) & -1 & -3 (w-1) \\
 1 & -3 & -1 & 3 \\
 1 & -15 & 35 & -21 \\
\end{array}
\right),Q=\left(
\begin{array}{cccc}
 1 & 35 & 35 (w-1) & w-1 \\
 1 & 7 & -7 & -1 \\
 1 & -1 & 1-w & w-1 \\
 1 & -5 & 5 & -1 \\
\end{array}
\right)
\end{align*}

\item Section~\ref{sec:q} for $(v,k,\lambda)=(45,33,24)$: 
\begin{align*}
P&=\left(
\begin{array}{cccc}
 1 & 33 (w-1) & 44 & 12 (w-1) \\
 1 & 3 (w-1) & -1 & -3 (w-1) \\
 1 & -3 & -1 & 3 \\
 1 & -33 & 44 & -12 \\
\end{array}
\right),Q=\left(
\begin{array}{cccc}
 1 & 44 & 44 (w-1) & w-1 \\
 1 & 4 & -4 & -1 \\
 1 & -1 & 1-w & w-1 \\
 1 & -11 & 11 & -1 \\
\end{array}
\right)
\end{align*}

\item Section~\ref{sec:q} for $(v,k,\lambda)=(64,36,20)$: 
\begin{align*}
P&=\left(
\begin{array}{cccc}
 1 & 36 (w-1) & 63 & 28 (w-1) \\
 1 & 4 (w-1) & -1 & -4 (w-1) \\
 1 & -4 & -1 & 4 \\
 1 & -36 & 63 & -28 \\
\end{array}
\right),Q=\left(
\begin{array}{cccc}
 1 & 63 & 63 (w-1) & w-1 \\
 1 & 7 & -7 & -1 \\
 1 & -1 & 1-w & w-1 \\
 1 & -9 & 9 & -1 \\
\end{array}
\right)
\end{align*}

\item Section~\ref{sec:q} for $(v,k,\lambda)=(96, 76, 60)$: 
\begin{align*}
P&=\left(
\begin{array}{cccc}
 1 & 76 (w-1) & 95 & 20 (w-1) \\
 1 & 4 (w-1) & -1 & -4 (w-1) \\
 1 & -4 & -1 & 4 \\
 1 & -76 & 95 & -20 \\
\end{array}
\right),Q=\left(
\begin{array}{cccc}
 1 & 95 & 95 (w-1) & w-1 \\
 1 & 5 & -5 & -1 \\
 1 & -1 & 1-w & w-1 \\
 1 & -19 & 19 & -1 \\
\end{array}
\right)
\end{align*}

\item Section~\ref{sec:q} for $(v,k,\lambda)=(100, 45, 20)$: 
\begin{align*}
P&=\left(
\begin{array}{cccc}
 1 & 45 (w-1) & 99 & 55 (w-1) \\
 1 & 5 (w-1) & -1 & -5 (w-1) \\
 1 & -5 & -1 & 5 \\
 1 & -45 & 99 & -55 \\
\end{array}
\right),\left(
\begin{array}{cccc}
 1 & 99 & 99 (w-1) & w-1 \\
 1 & 11 & -11 & -1 \\
 1 & -1 & 1-w & w-1 \\
 1 & -9 & 9 & -1 \\
\end{array}
\right)
\end{align*}

\item Section~\ref{sec:q} for $(v,k,\lambda)=(144, 78, 42)$: 
\begin{align*}
P&=\left(
\begin{array}{cccc}
 1 & 78 (w-1) & 143 & 66 (w-1) \\
 1 & 6 (w-1) & -1 & -6 (w-1) \\
 1 & -6 & -1 & 6 \\
 1 & -78 & 143 & -66 \\
\end{array}
\right),\left(
\begin{array}{cccc}
 1 & 143 & 143 (w-1) & w-1 \\
 1 & 11 & -11 & -1 \\
 1 & -1 & 1-w & w-1 \\
 1 & -13 & 13 & -1 \\
\end{array}
\right)
\end{align*}

\item Section~\ref{sec:q} for $(v,k,\lambda)=(153, 57, 21)$: 
\begin{align*}
P&=\left(
\begin{array}{cccc}
 1 & 57 (w-1) & 152 & 96 (w-1) \\
 1 & 6 (w-1) & -1 & -6 (w-1) \\
 1 & -6 & -1 & 6 \\
 1 & -57 & 152 & -96 \\
\end{array}
\right),\left(
\begin{array}{cccc}
 1 & 152 & 152 (w-1) & w-1 \\
 1 & 16 & -16 & -1 \\
 1 & -1 & 1-w & w-1 \\
 1 & -\frac{19}{2} & \frac{19}{2} & -1 \\
\end{array}
\right)
\end{align*}
\end{itemize}

\end{document}